\newtheorem{clm}{Claim}[section]
\newtheorem{remark}{Remark}
\numberwithin{equation}{section}
\def\Av{{\bf A}}
\def\B{{\bf B}}
\def\u{{\bf u}}
\def\n{{\bf n}}
\def\En{{\mathcal E}}
\def\half{\frac{1}{2}}
\def\divg{\nabla\cdot}
\def\curl{\nabla \times}
\def\Dx{\Delta x}
\def\Dphi{\Delta^+ q}
\def\be{\begin{equation}}
\def\ee{\end{equation}}
\def\ba{\begin{align}}
\def\ea{\end{align}}
\journal{Journal of Computational Physics}
\begin{document}

\begin{frontmatter}



\title{Finite Difference Weighted Essentially Non-Oscillatory Schemes
	with Constrained Transport for Ideal Magnetohydrodynamics}

\author[author1]{Andrew J. Christlieb}
\ead{christli@msu.edu}


\author[author2]{James A. Rossmanith\fnref{labc}}
\ead{rossmani@iastate.edu}

\author[author3]{Qi Tang}
\ead{tangqi@msu.edu}

\address[author1]{Department of Mathematics and Department of Electrical and Computer Engineering,
Michigan State University, East Lansing, MI 48824, USA}

\address[author2]{Department of Mathematics, Iowa State University, 
396 Carver Hall, Ames, IA 50011, USA}

\address[author3]{Department of Mathematics, Michigan State University, East Lansing, MI 48824, USA}

\fntext[labc]{Corresponding author}

\begin{abstract}

In this work we develop a class of high-order finite difference weighted essentially non-oscillatory (FD-WENO) 
schemes  for solving the ideal magnetohydrodynamic (MHD) equations in 2D and 3D. 
The philosophy of this work is to use efficient high-order WENO spatial discretizations with
 high-order strong stability-preserving Runge-Kutta (SSP-RK) time-stepping schemes.
 Numerical results have shown that with such methods we are able to resolve
 solution structures that are only visible at much higher grid resolutions with lower-order schemes.
The key challenge in applying such methods to ideal MHD is to control divergence errors in the magnetic field.
We achieve this by augmenting the base scheme with  a novel high-order constrained transport
approach that updates the magnetic vector potential. The predicted magnetic field from the base scheme
is replaced by a divergence-free magnetic field that is obtained from the curl of this magnetic potential.
The non-conservative weakly hyperbolic
system that the magnetic vector potential satisfies is solved using a version of FD-WENO
developed for Hamilton-Jacobi equations. 
The resulting numerical method is endowed with  several important properties: (1) all quantities, including all components of the magnetic field and magnetic potential, are treated as point values on the same mesh (i.e., there is no mesh staggering); 
(2) both the spatial and temporal orders of accuracy are  fourth-order; (3) no spatial integration or multidimensional reconstructions are needed in any step; and (4) special limiters in the magnetic vector potential update are used to control unphysical oscillations
in the magnetic field. 
Several 2D and 3D numerical examples are presented to verify the order of accuracy on smooth test problems and to show high-resolution on test problems that involve shocks.

\end{abstract}

\begin{keyword}
WENO; finite differences; magnetohydrodynamics; 
constrained transport; hyperbolic conservation laws; plasma physics; high-order
\end{keyword}
\end{frontmatter}

\section{Introduction}
The ideal magnetohydrodynamics (MHD) equations model the dynamics of a perfectly conducting
quasi-neutral plasma, and provide evolution equations for the macroscopic quantities of mass, momentum, and
energy density, as well as the magnetic field.  
MHD has been used successfully as a model in several application areas, including in space weather prediction, 
astrophysics, as well as in laboratory plasma applications such as flows in tokamaks and stellarators.
Mathematically, the MHD equations are a set of nonlinear hyperbolic conservation laws with the additional restriction that the magnetic field must remain
divergence-free for all time. In fact, at the continuum level, if the initial magnetic field is divergence-free, then the ideal MHD equations propagate this condition forward for all time.
Unfortunately, most standard numerical discretizations based on shock-capturing methods (e.g., finite volume,
weighted essentially non-oscillatory, discontinuous Galerkin) do not propagate a discrete version of the divergence-free
condition forward in time; and furthermore, this failure has been shown in the literature to
produce numerical instabilities.
The main  challenge in numerically simulating the ideal MHD system is therefore to augment existing schemes
so that they satisfy a divergence-free condition of the magnetic field in some way. 
Roughly speaking, there are four kinds of approaches that have been proposed in the literature:
(1) the  8-wave formulation \cite{article:Po94,article:Po99}, 
(2) projection methods \cite{article:BaKi04,article:To00}, 
(3) hyperbolic divergence-cleaning methods \cite{article:DKKMSW02},
and (4) constrained transport methods \cite{article:Ba04,article:BaSp99a,article:DaWo98,article:EvHa88,article:FeTo03,article:HeRoTa10,article:LoZa00,article:LoZa04,article:Ro04b,article:Ryu98,article:De01b,article:To00,article:To2005}.

In the current work we focus on the  {\it constrained transport} (CT) methodology for producing a magnetic field
that satisfies a discrete divergence-free condition. The CT method was originally introduced by Evans and Hawley 
\cite{article:EvHa88}, and, in their formulation, staggered electric and magnetic fields are used to create
appropriate mimetic finite difference operators that ultimately lead to an exactly divergence-free magnetic field. 
Their constrained transport framework is a modification of the popular Yee scheme \cite{article:Ye66}
from electromagnetics to ideal MHD. 

Since the introduction of the constrained transport methodology, there have been many modifications and extensions, 
especially in the context of high-resolution shock-capturing schemes. DeVore \cite{article:DeVore91} developed a 
flux-corrected transport implementation of the constrained transport approach. Balsara and Spicer \cite{article:BaSp99a}, Dai and Woodward \cite{article:DaWo98}, and Ryu et al.\ \cite{article:Ryu98},  all developed
various strategies for constructing the electric field via Ohm's law in the constrained transport framework.
Londrillo and Zanna \cite{article:LoZa00, article:LoZa04} proposed a high-order version of the
Evans and Hawley approach. De Sterck \cite{article:De01b} developed a similar CT method on unstructured triangular grids. 
Balsara \cite{article:Ba01c} developed from the constrained transport framework an adaptive mesh refinement (AMR) 
scheme that also included a globally divergence-free magnetic field reconstruction. 
There is a careful description and comparison of several of these methods in the 
article of T\'{o}th \cite{article:To00}, in which he also showed that a staggered magnetic field is not necessary,
and then introduced several unstaggered constrained transport methods. 

In recent years, unstaggered CT methods have attracted considerable interest due to their ease of implementation and applicability to adaptive mesh refinement (AMR) strategies. For instance, Fey and Torrilhon \cite{article:FeTo03} presented a way to preserve divergence-free condition through an unstaggered upwind scheme. Rossmanith \cite{article:Ro04b} developed an unstaggered CT method for the 2D MHD equations on Cartesian grids based on the wave-propagation method \cite{article:Le97}. Helzel et al.\ \cite{article:HeRoTa10,article:helzel2013} 
extended this unstaggered CT method to the 3D MHD equations and to mapped grids. 

In addition to the above mentioned papers, several high-order methods have been proposed in recent years
for the ideal MHD equations using a variety of discretization techniques. 
Balsara \cite{article:Bal09} developed a third-order RK-WENO method
using a staggered magnetic field to reconstruct a globally divergence-free magnetic field. 
Balsara et al. \cite{article:Bal13-ader,article:Bal09-ader} developed a class of high-order ADER-WENO schemes,
again using a staggered magnetic field to reconstruct a globally divergence-free magnetic field. 
Li et al. \cite{article:LiXuYa11} and Cheng et al. \cite{article:Cheng13} 
introduced a class of central discontinuous Galerkin schemes that evolves the MHD equations on
a primal as well as a dual mesh, and by intertwining these two updates, they showed
that a globally divergence-free magnetic field could be obtained.
Kawai \cite{article:Kawai13} devloped a high-order finite difference method with artificial resistivity,
where the finite difference operators were specifically constructed to guarantee that an appropriate
definition of the magnetic field divergence is propagated forward in time by the
numerical scheme.

The focus of the current work is to develop a class of high-order finite difference weighted essentially non-oscillatory (FD-WENO)  schemes  for solving the ideal magnetohydrodynamic (MHD) equations in 2D and 3D.  We make use of the
basic unstaggered constrained transport framework developed by Helzel et al. \cite{article:helzel2013}, although we
use different temporal and spatial discretizations, 
and we develop a different numerical technique for updating the magnetic vector potential and for correcting
the magnetic field. All aspects of the proposed numerical scheme are higher-order accurate (for smooth solutions)
than the method that is developed in \cite{article:helzel2013}.
A summary of the key features of the proposed  numerical method are listed below:
\begin{enumerate}
\item All quantities, including all components of the magnetic field and magnetic potential, are treated as point values on the same mesh (i.e., there is no mesh staggering).
\item The base scheme is the $5^{\text{th}}$-order FD-WENO scheme of Jiang and Shu \cite{article:JiShu96}. 
With this method we are able to achieve high-order using dimension-by-dimension finite difference operators, instead
of the more complicated spatial integration and multidimensional reconstructions used by Helzel et al. \cite{article:helzel2013}.
\item The corrected magnetic field is computed via $4^{\text{th}}$-order accurate central finite
difference operators  that approximate the curl of the magnetic vector potential. 
These operators are chosen to produce a corrected magnetic field that exactly satisfies a discrete divergence-free condition.
\item All time-stepping is done with the $4^{\text{th}}$-order accurate, 10-stage, low-storage, strong stability preserving Runge-Kutta (SSP-RK) method developed by Ketcheson \cite{article:Ketch08}.
\item Using a particular gauge condition, the magnetic vector potential is made to satisfy a weakly hyperbolic, non-conservative, hyperbolic system. This system is solved using a modified version of the FD-WENO scheme developed for 
Hamilton--Jacobi equations \cite{article:jiang2000}. Special limiters based on  artificial resistivity are introduced to 
help control unphysical oscillations in the magnetic field. 
\end{enumerate}
The numerical methods presented in this work are a first step in a larger effort to develop
a multi-purpose high-order FD-WENO code with adaptive mesh refinement (AMR) based on the
work Shen et al. \cite{article:ShQiCh}. Ultimately, we envision that this AMR code 
will have the capability to solve ideal MHD, as well as several other models from plasma physics.

The outline of the paper is as follows. In Section \ref{sec:mhd_eqns}  we will briefly review the MHD equations and the evolution
equations for the magnetic vector potential. In Sections \ref{ctmtd}--\ref{sdcurl} we describe different parts of the numerical
discretization of the proposed FD-WENO constrained transport method. In Section \ref{ctmtd} we outline the time-stepping
procedure in our unstaggered constrained transport methodology, including the predictor and corrector
steps in the update of the MHD variables. In Section \ref{sdmhd} we detail the $5^{\text{th}}$-order 
FD-WENO spatial discretization for the MHD system.
In Section \ref{sdmpe} we describe the spatial discretization of the 2D scalar potential evolution equations and the 3D vector potential equations. For the 3D case, we also develop an artificial resistivity term for controlling unphysical oscillations
in the magnetic field. The numerical curl operator and its properties are then discussed in Section 6, which completes all the steps of our CT methods. The resulting 2D and 3D schemes are implemented and tested on several numerical examples in Section \ref{numer}. 

\section{The ideal MHD equations}
\label{sec:mhd_eqns}

The ideal MHD equations in  conservation form can be written as
\begin{gather}
\label{MHD}
  \frac{\partial}{\partial t}
   \begin{bmatrix}
     \rho \\ \rho \u \\ \En\\ \B
    \end{bmatrix}
    + \divg{
    	\begin{bmatrix}
	  \rho \u \\ 
	  \rho \u \otimes \u + (p+\frac{1}{2} \| \B \|^2) \mathbb{I} - \B \otimes \B \\
	  \u (\mathcal{E} +  p +\frac{1}{2} \| \B \|^2 ) - \B(\u \cdot \B) \\
	  \u \otimes \B- \B \otimes \u 
	\end{bmatrix}
	}
	 = 0, \\
\label{constraint}
\divg{ \B} = 0,
\end{gather} 
where $\rho$, $\rho \u$, and $\En$ are the total mass, momentum and energy densities of the system, $\B$ is the magnetic field,  and $p$ is the hydrodynamic pressure. The total energy density is given by
\begin{align}
\label{eq:eng}
	\mathcal{E} =\frac{p}{\gamma-1} + \half {\rho \|\u\|^2}   + \half {\|\B\|^2},
\end{align}
where $\gamma = 5/3$ is the ideal gas constant. Here $\| \cdot \|$ is used to denote the Euclidean vector norm. A complete derivation of MHD system \eqref{MHD}--\eqref{constraint} can be found in many standard plasma physics textbooks (e.g., pages 165--190 of \cite{book:Pa04}).

\subsection{Hyperbolicity of the governing equations}
\label{hypermhd}
Equations $\eqref{MHD}$, along with the equation of state $\eqref{eq:eng}$, form
a system of  hyperbolic conservation laws:
\begin{equation}
	q_{,t} + \nabla \cdot {\bf F}(q) = 0,
\end{equation}
where   $q = (\rho,\rho \u, \En, \B)$ are the conserved variables and ${\bf F}$ is the flux tensor (see \eqref{MHD}).
Under the assumption of positive pressure ($p>0$) and density ($\rho>0$), the flux Jacobian in some arbitrary direction $\n$ ($\| \n \| = 1$), $A(q; {\bf n}) := \n \cdot {\bf F}_{,q}$, is a diagonalizable
matrix with real eigenvalues. 
In particular, the eigenvalues of the flux Jacobian matrix in some arbitrary direction $\n$ ($\|\n\|=1$) can be written as follows:
\begin{alignat}{2}
  \lambda^{1,8} &= {\bf u} \cdot {\bf n} \mp c_f & & \qquad \text{: fast magnetosonic waves,} \\
  \lambda^{2,7} &= {\bf u} \cdot {\bf n}  \mp c_a & & \qquad \text{: Alfv$\acute{\text{e}}$n waves,} \\
  \lambda^{3,6} &= {\bf u} \cdot {\bf n}  \mp c_s & & \qquad \text{: slow magnetosonic waves,} \\
  \lambda^{4} &= {\bf u} \cdot {\bf n} & & \qquad \text{: entropy wave,} \\
  \lambda^{5} &= {\bf u} \cdot {\bf n}  & & \qquad \text{: divergence wave,}
\end{alignat}
where
\begin{align}
a & \equiv \sqrt{\frac{\gamma p}{\rho}}, \\
c_a & \equiv \sqrt{\frac{(\B \cdot \n)^2}{\rho}}, \\
c_f & \equiv \left\{ \frac{1}{2} \left[ a^2+\frac{\|\B\|^2}{\rho} + \sqrt{ \left(a^2+\frac{\|\B\|^2}{\rho}\right)^2 - 4a^2 \frac{(\B \cdot \n)^2}{\rho} } \right] \right\}^{\frac{1}{2}},\\ 
c_s & \equiv \left\{ \frac{1}{2} \left[ a^2+\frac{\|\B\|^2}{\rho} - \sqrt{ \left(a^2+\frac{\|\B\|^2}{\rho}\right)^2 - 4a^2 \frac{(\B \cdot \n)^2}{\rho} } \right] \right\}^{\frac{1}{2}}.
\end{align}
The eight eigenvalues are well-ordered in the sense that
\begin{align}
\lambda^1 \le \lambda^2 \le \lambda^3 \le \lambda^4 \le \lambda^5 \le \lambda^6 \le \lambda^7 \le \lambda^8.
\end{align}


\subsection{Magnetic potential in 3D}
\label{mhd-eq-mq3d}
Although there are many available numerical methods for solving hyperbolic systems
(e.g., finite volume, WENO, and discontinuous Galerkin) and most of them can in principle be directly used to simulate the MHD systems, the main challenge in numerically solving the MHD equations is related to the divergence-free condition on the magnetic field. First, we note that the MHD system \eqref{MHD} along with \eqref{eq:eng} is already a closed set of eight evolution equations.
Second, we note that $\divg \B = 0$ is an {\it involution} instead of a constraint (see page 119--128 of \cite{book:Dafermos10}),
because if $\divg \B = 0$ is satisfied initially ($t = 0$), then system \eqref{MHD} guarantees that $\divg \B = 0$ is satisfied for all future time ($t>0$). 
Unfortunately, most numerical discretizations of MHD do not propagate some discrete version of $\divg \B = 0$ forward
in time. As has been shown repeatedly in the literature, failure to adequately control the resulting divergence errors can lead to numerical instability (see e.g., \cite{article:BaKi04,article:LiShu05,article:Po94,article:Ro04b,article:De01b,article:To00}).
To address this issue, we will make use of the magnetic potential in the numerical methods described in this work. 

Because it is divergence-free, the magnetic field can be written as the curl of a magnetic vector potential:
\begin{align}
\label{ptform}
\B = \nabla \times \Av.
\end{align}
Furthermore, we can write the evolution equation of the magnetic field in the MHD systems $\eqref{MHD}$
in curl form:
\begin{align}
\label{bevolve}
\B_{,t} + \nabla \times (\B \times \u) = 0,
\end{align}
due to the following relation
\begin{align}
\divg \left( \u \otimes \B - \B \otimes \u\right) = \curl(\B \times \u).
\end{align}
Using the magnetic vector potential $\eqref{ptform}$,   evolution equation $\eqref{bevolve}$ can be written as
\begin{align}
\label{curlA}
\curl \left\{ \Av_{,t} + (\curl \Av)  \times \u \right\} = 0.
\end{align}
The relation $\eqref{curlA}$ implies the existence of a scalar function $\psi$ such that
\begin{align}
\Av_{,t} + (\curl \Av)  \times \u = -\nabla \psi.
\end{align}
In order to uniquely (at least up to additive constants) determine the additional scalar function $\psi$, 
we must prescribe some gauge condition.

After investigating several gauge conditions, Helzel et al.  \cite{article:HeRoTa10} found that 
one can obtain stable solutions by introducing the Weyl gauge, i.e., setting $\psi \equiv 0$. With this gauge choice, the evolution equation for the vector potential becomes
\begin{align}
\Av_{,t} + (\curl \Av)  \times \u = 0,
\end{align}
which can be rewritten as a non-conservative quasilinear system,
\begin{align}
\label{3dpte}
\Av_{,t} + N_1 \, \Av_{,x} + N_2 \, \Av_{,y} + N_3 \, \Av_{,z} = 0,
\end{align}
where
\begin{align}
\hspace{-3mm}
\label{eqn:nmats}
N_1 =  \begin{bmatrix}
  0 & -u^2 & -u^3 \\
  0 & u^1 & 0 \\
  0 & 0 & u^1
 \end{bmatrix}, 
 N_2 =   \begin{bmatrix}
  u^2 & 0 & 0 \\
  -u^1 & 0 & -u^3 \\
  0 & 0 & u^2
 \end{bmatrix}, 
 N_3 =  \begin{bmatrix}
  u^3 & 0 & 0 \\
  0 & u^3 & 0 \\
  -u^1 & -u^2 & 0
 \end{bmatrix}.
\end{align}
One difficulty with system \eqref{3dpte}--\eqref{eqn:nmats} is that it is only weakly hyperbolic \cite{article:HeRoTa10}. 
In order to see this weak hyperbolicity, we start with the flux Jacobian matrix in some arbitrary direction $\n = (n^1,n^2,n^2)$:
\begin{align}
\label{whmatrix}
n^1 N_1 + n^2 N_2 + n^3 N_3 = 
\begin{bmatrix}
  n^2u^2 + n^3u^3 & -n^1u^2 & -n^1u^3 \\
  -n^2u^1  & n^1u^1 + n^3u^3 & -n^2u^3 \\
  -n^3u^1  & -n^3u^2 & n^1u^1 + n^2u^2
 \end{bmatrix}.
\end{align}
The eigenvalues of matrix $\eqref{whmatrix}$ are
\begin{align}
\lambda^1 = 0, \quad \lambda^{2} = \lambda^{3} = \n\cdot\u,
\end{align}
and the matrix of right eigenvectors can be written as
\begin{align}
\label{whmatrix2}
R =  \left[ \begin{array}{c|c|c}
   &  &  \\
  r^{(1)} & r^{(2)} & r^{(3)} \\
    &  & 
 \end{array} \right] = 
\begin{bmatrix}
  n^1 & n^2u^3-n^3u^2 & u^1(\u\cdot\n)-n^1\|\u\|^2 \\
  n^2 & n^3u^1-n^1u^3 & u^2(\u\cdot\n)-n^2\|\u\|^2 \\
  n^3 & n^1u^2-n^2u^1 & u^3(\u\cdot\n)-n^3\|\u\|^2
 \end{bmatrix}.
\end{align}
If we assume that $\|\u\| \neq 0$ and $\|\n\|=1$, the determinant of matrix $R$ is 
\begin{align}
\det(R) = - \|\u\|^3\cos(\alpha)\sin(\alpha),
\end{align}
where $\alpha$ is the angle between $\n$ and $\u$. In particular, there exist four degenerate directions, $\alpha =0, \pi/2, \pi, \text{and } 3\pi/2$, in which the eigenvectors are incomplete. Hence, the system $\eqref{3dpte}$ is only weakly hyperbolic. 

\subsection{Magnetic potential in 2D}
A special case of the situation described above is the MHD system in 2D. In particular, what we mean
by 2D is that all eight conserved variables, $q = (\rho,\rho \u, \En, \B)$, can be non-zero, but each depends
on only three independent variables: $t$, $x$, and $y$.
From the point-of-view of the magnetic potential, the 2D case
is much simpler than the full 3D case, due to the fact that the divergence-free condition simplifies to
\begin{align}
\label{2dcons}
\nabla \cdot \B = B^1_{, x} + B^2_{,y} = 0.
\end{align}
It can be readily seen that solving $B^3$ by any numerical scheme will not have any impact on the satisfaction of the divergence-free condition $\eqref{2dcons}$. In other words, using the magnetic potential to define $B^3$ is unnecessary. 
Instead, in the 2D case, we can write
\begin{align}
\label {2dcurl}
B^1 =  A^3_{, y} \quad \text{and} \quad B^2 = - A^3_{, x},
\end{align}
which involves only the third component of the magnetic potential, thereby effectively reducing the magnetic
{\it vector} potential to a {\it scalar} potential.
Consequently, the constrained transport method in 2D can be simplified to solving an advection equation for the third component of the vector potential:
\begin{align}
\label{2dpte}
 A^3_{, t} + u^1  A^3_{,x} + u^2 A^3_{,y} = 0.
\end{align}
This has the added benefit that \eqref{2dpte} is {\it strongly} hyperbolic, unlike its counterpart in the 3D case.

\section{Time discretization in the constrained transport framework}
\label{ctmtd}

In the unstaggered constrained transport (CT)
method for the ideal MHD equations developed by Helzel et al. \cite{article:helzel2013},
a conservative finite volume hyperbolic solver for the MHD equations is coupled to a non-conservative finite volume solver for the vector potential equation. Momentarily leaving out the details of our spatial discretization, the proposed method in this work
can be put into the same basic framework as that of Helzel et al. \cite{article:helzel2013}. 
We summarize this framework below.

We write the semi-discrete form of MHD equations $\eqref{MHD}$ as follows:
\begin{align}
\label{smmhd}
Q'_{\text{MHD}}(t) = \mathcal{L}(Q_{\text{MHD}}(t)),
\end{align}
where $Q_{\text{MHD}} = \left( \rho, \rho \u, {\mathcal E}, \B \right)$.
We write the semi-discrete form of the evolution equation of the magnetic potential (\eqref{3dpte} in 3D and \eqref{2dpte} in
 2D) as follows:
\begin{align}
\label{smmpt}
Q'_{\text{A}}(t) = \mathcal{H}(Q_{\text{A}}(t),\u(t)),
\end{align}
where $Q_{\text{A}} = (A^1, A^2, A^3)$ in 3D and $Q_{\text{A}} = A^3$ in 2D.

For simplicity, we present the scheme using only forward Euler time-stepping. 
Extension to high-order strong stability-preserving Runge-Kutta (SSP-RK) methods is straightforward,
since SSP-RK time-stepping methods are  convex combinations of forward Euler steps. A 
single time-step of the CT method from time $t=t^n$ to time $t=t^{n+1}$ consists of the following sub-steps:

\begin{enumerate}
\setcounter{enumi}{-1}
\item Start with $Q^n_{\text{MHD}}$ and $Q^n_\text{A}$ (the solution at $t^n$).

\item Build the right-hand sides of both semi-discrete systems $\eqref{smmhd}$ and $\eqref{smmpt}$ using 
the FD-WENO spatial discretizations that will be described in detail in \S \ref{sdmhd} and \S \ref{sdmpe}, and independently update each system:
\begin{align}
Q_{\text{MHD}}^{*} & =  Q_{\text{MHD}}^n + \Delta t \, \mathcal{L} (Q_{\text{MHD}}^{n}), \\
\label{eqn:potential-update}
Q_{\text{A}}^{n+1} & = Q_{\text{A}}^n + \Delta t \, \mathcal{H} (Q_{\text{A}}^{n}, \u^{n}),
\end{align}
where $Q_{\text{MHD}}^{*} = (\rho^{n+1}, \rho \u^{n+1}, \En^{*}, \B^{*})$. $\B^{*}$ is the predicted magnetic field that
in general does not satisfy a discrete divergence-free constraint and $\En^{*}$ is the predicted energy.
\item Replace $\B^{*}$ by a discrete curl of the magnetic potential $Q_{\text{A}}^{n+1}$:
\begin{align}
\B^{n+1} = \curl Q_{\text{A}}^{n+1}.
\end{align}
This discrete curl will be defined appropriately so that $\nabla \cdot \B^{n+1} = 0$ for some appropriate
definition of the discrete divergence operator (see \S \ref{sdcurl} for details).
\item Set the corrected total energy density $\En^{n+1}$ based on the following options:
	{\begin{enumerate}
	\item[] \textbf{Option 1:} Conserve the total energy: 
								\begin{align}
								\En^{n+1} = \En^{*}.
								\end{align}
	\item[] \textbf{Option 2:} Keep the pressure the same before and after the magnetic field 
		correction step ($p^{n+1} = p^{*}$): 
								\begin{align}
								\En^{n+1} = \En^{*}+\frac{1}{2} \left( \|\B^{n+1}\|^2 -\|\B^{*}\|^2 \right).
								\end{align}
This option may help to preserve the positivity of the pressure in certain problems, and thus can lead
to improved numerical stability, albeit at the expense of sacrificing energy conservation.
	\end{enumerate}}
\end{enumerate}

In this work we make exclusive use of \textbf{Option 1} and thus conserve the total energy.
Furthermore, recent work on positivity limiters such as the work of Zhang and Shu 
\cite{article:ZhangShu11} (for compressible Euler)  and Balsara \cite{article:Balsara12} (for MHD), 
have shown that it is possible to achieve pressure positivity even when conserving the total energy. 
Application of such positivity limiters to the currently proposed scheme
 is something that we hope to investigate in future work.

For the spatial discretizations of the MHD operator in \eqref{smmhd} and the
 the magnetic potential operator in \eqref{smmpt} we use $5^{\text{th}}$-order accurate
 finite difference WENO spatial discretizations (see \S \ref{sdmhd} and \S \ref{sdmpe}). For the discrete curl
 operator we use a $4^{\text{th}}$-order accurate central finite difference operator (see \S \ref{sdcurl}).
 In order to also achieve high-order accuracy in time, we make use of the 
 10-stage 4th-order strong stability-preserving Runge--Kutta (SSP-RK) scheme described in Ketcheson \cite{article:Ketch08}.
Although this method has 10-stages, it has a simple low-storage implementation that  requires the storage
of only two solution vectors. This method is detailed in Algorithm \ref{alg:ssp-rk}.
 \begin{algorithm}
  \SetAlgoNoLine
  $Q^{(1)} = Q^{n}$\; $Q^{(2)} = Q^{n}$\;
  \For{$i = {1:5}$}{
	$Q^{(1)} = Q^{(1)} + \frac{\Delta t}{6} \, \mathcal{L} \left(Q^{(1)} \right)$\;
  }
  $Q^{(2)} =  \frac{1}{25} \, Q^{(2)} + \frac{9}{25}\,Q^{(1)}$\;
  $Q^{(1)} = 15 \, Q^{(2)} - 5 \, Q^{(1)}$\;
  \For{$i = {6:9}$}{
	$Q^{(1)} = Q^{(1)} + \frac{\Delta t}{6}\, \mathcal{L} \left(Q^{(1)} \right)$\;
  }
  $Q^{n+1} =  Q^{(2)} + \frac{3}{5}\,Q^{(1)} + \frac{\Delta t}{10} \, \mathcal{L} \left(Q^{(1)} \right)$\;
  \caption{Low-storage SSP-RK4 method of Ketcheson \cite{article:Ketch08}. \label{alg:ssp-rk}}
\end{algorithm}

Since the SSP-RK4 method is a convex combination of forward Euler steps, using it in conjunction
with the above constrained transport framework turns out to be straightforward. In the
 spatial discretization of $\eqref{smmpt}$, the velocity $\u$ is always taken as a given function, and hence 
  equation $\eqref{smmpt}$ is treated as a closed equation for the magnetic potential. In addition, the correction of $\B^*$  (Step 2) is performed in each stage of an SSP-RK4 time-step. For smooth problems, this overall procedure gives a solution that is $4^{\text{th}}$-order accurate. This will be confirmed via numerical convergence studies in Section~\ref{numer}. 

Gottlieb et al. \cite{article:GoKeSh09} pointed out that the SSP-RK4 time-stepping scheme 
coupled to the $5^{\text{th}}$-order FD-WENO method 
results in essentially non-oscillatory solutions for CFL numbers up to $3.07$.
We confirmed this results by applying this method on the 1D Brio-Wu shock-tube problem \cite{article:BrWu88}.
In Section~\ref{numer} we use a CFL number of $3.0$ for all the 2D and 3D numerical examples and obtain good results. The fact that we are able to use a larger CFL number offsets
the computational cost of having to compute 10-stages for every time-step. 
As a point of comparison, we recall the 4-stage $4^{\text{th}}$-order Runge--Kutta method (RK44) used
by Jiang and Wu \cite{article:JiWu99} in their high-order FD-WENO scheme for ideal MHD.
First, we note that the RK44 scheme used in \cite{article:JiWu99} is not strong stability-preserving (SSP).
Second, a typical CFL number of RK44 with $5^\text{th}$-order FD-WENO is $0.8$. Consequently, even though SSP-RK4 has 10 stages, at a CFL number of $3.0$ it is still more efficient than RK44 (4 stages at a CFL number of $0.8$). A similar conclusion for the linear advection equation can be found in \cite{article:GoKeSh09}.
Another important feature of SSP-RK4 is its low-storage property, which is  greatly advantageous for 3D simulations and GPU implementations.

One drawback of using a 10-stage RK method versus a lower-stage RK method is that it may
become inefficient if it is used in an adaptive mesh refinement framework such as the one proposed
in \cite{article:ShQiCh}. The main difficulty is that as the number of stages increases, the number of ghost cells on each grid patch is also increased, which results in more communication across grid
patches. In the current work, we are only considering a single grid (i.e., no adaptive mesh refinement),
and thus the 10-stage SSP-RK works well. We will consider issues related to AMR implementations of the proposed scheme in future work.

\section{FD-WENO spatial discretization of the ideal MHD equations}
\label{sdmhd}
In this section we describe the semi-discrete finite difference weighted essentially non-oscillatory (FD-WENO) scheme
that comprises the {\it base scheme} in the constrained transport framework described in \S \ref{ctmtd}. 
Our method of choice is the finite difference WENO method developed by Jiang and Wu \cite{article:JiWu99}.
We will refer to this method as the WENO-HCL\footnote{WENO-HCL := Weighted Essentially Non-Oscillatory for Hyperbolic Conservation Laws.} scheme.
In what follows, we describe the basic WENO-HCL scheme in one space dimension for the ideal MHD equations.
We then briefly discuss the straightforward extension to higher dimensions.

We write the MHD system $\eqref{MHD}$ in 1D as follows:
\begin{align}
\label{eqn:conslaw}
q_{,t} + f(q)_{,x} = 0,
\end{align}
where
\begin{gather}
q =  \left(\rho, \rho u^1, \rho u^2, \rho u^3, \mathcal{E}, B^1, B^2,B^3\right), \\
\begin{split}
f(q)  &=  \biggl( \rho u^1, \rho u^1 u^1 + p + \frac{1}{2} \|\B\|^2 - B^1 B^1, \rho u^1 u^2 -  B^1 B^2,  \rho u^1 u^3 - B^1 B^3, \\ 
  & \qquad u^1 \left(\mathcal{E}+p+\frac{1}{2}\|\B\|^2 \right)- B^1(\u \cdot \B), 0, u^1 B^2 - u^2 B^1, u^1 B^3 - u^3 B^1 \biggr).
\end{split}
\end{gather}
For convenience, we also introduce
\begin{align}
u =(\rho,u^1,u^2,u^3,p,B^1,B^2,B^3)
\end{align}
to denote the vector of primitive variables. 

Due to the hyperbolicity of the MHD systems, the flux Jacobian matrix $f_{,q}$ has a
spectral decomposition of the form
\begin{align}
f_{,q} = R \Lambda L,
\end{align}
where $\Lambda$ is the diagonal matrix of real eigenvalues, $R$ is the matrix of right eigenvectors and $L = R^{-1}$ is the matrix of left eigenvectors.

We consider the problem on a uniform grid with $N+1$ grid points as follows:
\begin{align}
a = x_{0} < x_{1} < \dots < x_{N} = b,
\end{align}
and let $q_i(t)$ denote the  approximate solution of the MHD system at the point
 $x = x_i$. The WENO-HCL scheme for system \eqref{eqn:conslaw} can be written in the following flux-difference form:
\begin{align}
\label{fdcf}
	q'_i(t) = \frac{1}{\Delta x} \left( \hat{f}_{i+\frac{1}{2}} - \hat{f}_{i-\frac{1}{2}} \right).
\end{align}
To obtain the numerical flux, $\hat{f}_{i+\frac{1}{2}}$, in the above semi-discrete form, the following WENO 
procedure is used:
\begin{enumerate}
\item Compute the physical flux at each grid point:
\begin{align}
f_i = f(q_i).
\end{align}
\item At each $x_{i+\frac{1}{2}}$: 
\begin{enumerate}
\item Compute the average state $u_{i+\half}$ in the primitive variables:
\begin{align}
\label{eqn:arthm_ave}
	u_{i+\frac{1}{2}} = \frac{1}{2}\left( u_i + u_{i+1} \right).
\end{align}
\item Compute the right and left eigenvectors of the flux Jacobian matrix, $f_{,q}$, at $x = x_{i+\frac{1}{2}}$:
\begin{align}
	R_{i+\frac{1}{2}} = R\left( u_{i+\frac{1}{2}} \right) \quad \text{and} \quad
	L_{i+\frac{1}{2}} = L\left( u_{i+\frac{1}{2}} \right),
\end{align}
where $L_{i+\frac{1}{2}} = R^{-1}_{i+\frac{1}{2}}$.
\item Project the solution and physical flux into the right eigenvector space:
\begin{align}
\begin{split}
	V_{ j } = L_{i+\half} \,  q_{j} \quad \text{and} \quad
	G_{ j } = L_{i+\half} \,  f_{j},
\end{split}
\end{align}
for all $j$ in the numerical stencil associated with $x = x_{i+\half}$.
In the case of the $5^{\text{th}}$-order FD-WENO scheme: 
$j = i-2, i-1, i, i+1, i+2, i+3$.
\item Perform a Lax-Friedrichs flux vector splitting for each component of the characteristic variables. Specifically, assume that the $m^\text{th}$ components of $V_j$ and $G_j$ are $v_j$ and $g_j$, respectively, then compute
\begin{align}
	g^{\pm}_{j} = \frac{1}{2}\left( g_{j} \pm \alpha^{(m)} v_{j} \right),
\end{align}
where
\begin{align} 
\alpha^{(m)} = \max_{k} \left| \lambda^{(m)}( q_{k} ) \right|
\end{align}
is the maximal wave speed of the $m^\text{th}$ component of characteristic variables over all grid points. Note that the eight
eigenvalues for ideal MHD are given in \S\ref{hypermhd}.
\item Perform a WENO reconstruction on each of the computed flux components $g^{\pm}_{j}$ to obtain the corresponding component of the numerical flux.  If we let $\Phi_{\text{WENO}5}$ denote the $5^{\text{th}}$-order WENO reconstruction
operator (see \ref{sec:appendix} for a detailed description), then the flux is computed as
follows:
\begin{align}
	\hat{g}^+_{i+1/2} & = \Phi_{\text{WENO}5}\left( g^+_{i-2}, g^+_{i-1}, g^+_{i}, g^+_{i+1}, g^+_{i+2} \right), \\
	\hat{g}^-_{i+1/2} & = \Phi_{\text{WENO}5}\left( g^-_{i+3}, g^-_{i+2}, g^-_{i+1}, g^-_{i}, g^-_{i-1} \right).
\end{align}
Then set
\begin{align}
\hat{g}_{i+\frac{1}{2}} = \hat{g}^+_{i+\frac{1}{2}}  + \hat{g}^-_{i+\frac{1}{2}},
\end{align}
where $\hat{g}_{i+\half}$ is the $m^\text{th}$ component of $\hat{G}_{i+\frac{1}{2}}$.
\item Project the numerical flux back to the conserved variables
\begin{align}
\hat{f}_{ i+\frac{1}{2} } = R_{i+\frac{1}{2}} \, \hat{G}_{i+\frac{1}{2}}.
\end{align}
\end{enumerate}
\end{enumerate}

\begin{remark}
In Step (a), although one could define the average state at $x_{i+\frac{1}{2}}$ using the Roe averages developed by
 Cargo and Callice \cite{article:CaCa97}, we instead define the state at $x_{i+\frac{1}{2}}$ via simple arithmetic averages of the primitive variables (equation \eqref{eqn:arthm_ave}). The arithmetic averages are computationally less expensive to evaluate than the Roe averages and produce good numerical results in practice. 
It was  pointed out in \cite{article:JiWu99} that there is little difference in the numerical results when different approaches for defining the half-grid state are used in the base WENO scheme.
\end{remark}

\begin{remark}
In Step (b) there are several different versions of right eigenvectors scalings \cite{barth1999,article:BrWu88,article:Po94,article:Po99}. In this work we make use of
 the eigenvector scaling based on entropy variables proposed by Barth \cite{barth1999}. This approach
is advantageous in that it is relatively simple to implement and gives the optimal direction-independent matrix norm of the eigenvector matrix.
\end{remark}

\begin{remark}
In Step (d) we use a {\it global} Lax-Friedrichs flux splitting, meaning that the $\alpha^{(m)}$'s
are computed as the maximum of the $m^{\text{th}}$ eigenvalue over the entire mesh.
One could just as easily use a {\it local} Lax-Friedrichs flux splitting and only maximize the
eigenvalue over the stencil on which the flux is defined. In all of the
numerical test problems attempted in this work, we found no significant differences
between the global and local approaches. In some applications where the eigenvalue
changes dramatically in different regions of the computational domain it may be 
advantageous to switch to the local Lax-Friedrichs approach. 
\end{remark}


In the 1D case we find that the above WENO-HCL scheme applied to MHD
can produce high-order accurate solutions for smooth problems
and can accurately capture shocks without producing unphysical oscillations around discontinuities.
The scheme as described so far can be easily extended to higher dimensions, simply by applying the
FD-WENO definition of the numerical fluxes dimension-by-dimension.

The multi-dimensional version of the method described in this section serves as the {\it base scheme} for our proposed 
constrained transport method for ideal MHD. However, as has been well-documented in the literature,
direct application of only the base scheme will lead to divergence errors in the magnetic field, which in turn will lead
to numerical instabilities (e.g., see  Example~$\ref{section:2drs}$ in Section~\ref{numer}). 
In order to overcome this problem, we also need to directly evolve the magnetic potential as outlined in \S \ref{ctmtd}.
In the next section we show how to modify the WENO-HCL scheme to create a high-order accurate
numerical update for the magnetic vector potential equation, which will then  be used
to correct the magnetic field that is predicted by the WENO-HCL base scheme (see \S \ref{ctmtd}
for the full outline of a single stage of the proposed constrained transport scheme).

\section{FD-WENO spatial discretization of the magnetic potential equation}
\label{sdmpe}
In this section we discuss a novel approach for discretizing the magnetic potential equations in 2D and 3D. 
There are two main challenges in obtaining such discretizations:
(1) we must design a high-order finite difference method capable of solving the non-conservative and weakly hyperbolic
system that the magnetic potential satisfies; and (2) we must design appropriate limiting strategies that
act on the update for the magnetic potential, but which control unphysical oscillations in the magnetic field. 
The approach we develop is a modification of the WENO method of Jiang and Peng \cite{article:jiang2000},
which was designed for Hamilton--Jacobi equations. 
We begin by describing the 1D version of WENO scheme of Jiang and Peng \cite{article:jiang2000},
then show how to modify this approach to solve the scalar 2D magnetic potential equation \eqref{2dpte}, and finally
describe how to generalize this to the more complicated systems 3D magnetic potential equation \eqref{3dpte}.

\subsection{WENO for 1D Hamilton--Jacobi}
Consider a 1D Hamilton--Jacobi equation of the form
\begin{align}
\label{2dhj}
	q_{,t} + H\left(t,x,q,q_{,x} \right) = 0,
\end{align}
where $H$ is the Hamiltonian.
Jiang and Peng \cite{article:jiang2000} developed a semi-discrete approximation to
\eqref{2dhj} of the following form:
\begin{align}
	\frac{d q_{i}(t)}{dt} = -\hat{H}\left( t,x_i,q_{i},q^-_{,x i},q^+_{,x i} \right),
\end{align}
where $\hat{H}$ is the numerical Hamiltonian and is consistent with $H$ in the following sense:
\begin{align}
\hat{H}\left(t,x,q,u,u\right) = H\left(t,x,q,u\right),
\end{align}
and $q^-_{,x i}$ and $q^+_{,x i}$ are left and right-sided approximations of $q_{,x}$ at $x=x_i$.
The values of  $q^-_{,x i}$ and $q^+_{,x i}$ are obtained by performing WENO reconstruction as follows:
\begin{align}
\label{dqweno1}
q^-_{,x i} & = {\Phi}_{\text{WENO}5}\left( \frac{\Dphi_{i-3}}{\Dx}, \frac{\Dphi_{i-2}}{\Dx}, \frac{\Dphi_{i-1}}{\Dx}, \frac{\Dphi_{i}}{\Dx}, \frac{\Dphi_{i+1}}{\Dx} \right), \\
\label{dqweno2}
q^+_{,x i} & = {\Phi}_{\text{WENO}5}\left( \frac{\Dphi_{i+2}}{\Dx}, \frac{\Dphi_{i+1}}{\Dx}, \frac{\Dphi_{i}}{\Dx}, \frac{\Dphi_{i-1}}{\Dx}, \frac{\Dphi_{i-2}}{\Dx} \right),
\end{align}
where
\begin{align}
\Dphi_{ij} := q_{i+1 j} - q_{ij},
\end{align}
and ${\Phi}_{\text{WENO}5}$ uses the same formula as the one for WENO-HCL (see \ref{sec:appendix}). The difference here is that the reconstructions $\eqref{dqweno1}$ and $\eqref{dqweno2}$ are applied to the central derivative of the solution $q$, while the reconstruction in WENO-HCL is applied to the fluxes on grid points. The new reconstruction helps us control unphysical oscillations in $q_{,x}$ not $q$.
This is in an important distinction since
with Hamilton--Jacobi we are solving for a potential, the derivatives of which produce a physical
variable.

As described in detail in \ref{sec:appendix}, the WENO reconstruction formulas, $\Phi_{\text{WENO}5}$, depend
on smoothness indicators, $\beta$, that control how much weight to assign the different finite difference
stencils. In the standard WENO-HCL framework, the weights are chosen so as to control unphysical
oscillations in the conserved variables. In WENO-HCL the smoothness indicator is computed as follows \cite{article:JiShu96}:
\begin{align}
\label{eqn:sm-indicator}
\beta_j = \sum_{\ell=1}^{k} \Delta x^{2\ell-1} \int_{x_{i-\frac{1}{2}}}^{x_{i+\frac{1}{2}}} \left(\frac{d^{\ell}}{dx^{\ell}} p_j(x)\right)^2 \, dx,
\end{align}
where $p_j$ is an interpolating polynomial of the values $q_i$  in some stencil and $k$ is the degree of $p_j$. Note that
 the smoothness indicator is computed by including the normalized total variation of the first derivatives of $p_j$ (i.e., the $\ell=1$ term), leading to an essentially non-oscillatory solution $q_i$. However, $\beta_j$ could be dominated by the total variation of $\frac{d}{dx} p_j(x)$, which is not important in controlling the oscillation of $q_{,x}$.
 
Based on the above observation about WENO reconstruction, we realize if the reconstruction is applied to $\frac{\Dphi_{i}}{\Dx}$ as in $\eqref{dqweno1}$ and $\eqref{dqweno2}$, the new $p_j$ becomes an interpolating polynomial of $q_{,x}$. The same smoothness indicator formula $\eqref{eqn:sm-indicator}$ evaluates the smoothness of the interpolating polynomial of $q_{,x}$ in this case. So if $q_{,x}$ is not smooth, this procedure will approximate the derivative $q_{,xi}$ by essentially using the stencil that has the smoothest derivative. In other words, the oscillations in $q_{,x}$ can be controlled. 
A similar idea is used in the method of Rossmanith \cite{article:Ro04b}, where a TVD limiter is applied to wave differences 
instead of waves so as to control the oscillation in the computed solution derivatives.



Finally, in order to evaluate the numerical Hamiltonian, $\hat{H}$, Jiang and Peng \cite{article:jiang2000}
introduced a Lax-Friedrichs-type definition:
\begin{align}
\label{wenohj}
\begin{split}
\hat{H}\left(t,x,u^-,u^+ \right) = & H\left(t,x,\frac{u^-+u^+}{2} \right) 
 - \alpha \left( \frac{u^+-u^-}{2} \right),
\end{split}
\end{align}
where
\begin{align}
\alpha = \max_{\substack{u \, \in \, I(u^-, u^+)}} \left|H_{,u}(t,x,q,u)\right|,
\end{align}
where $I(u^-, u^+)$ is the interval between $u^-$ and $u^+$.

We refer to the scheme discussed in this section as the WENO-HJ scheme.
To compare the WENO-HJ scheme with the WENO-HCL scheme, we consider a 
simple test problem on which both WENO-HJ and WENO-HCL can be applied.
We consider the 1D linear constant coefficient advection equation,
\begin{equation}
q_{,t} + q_{,x} = 0,
\end{equation}
 on $x \in [0,1]$ with the periodic boundary condition and the following piecewise linear initial condition:
\begin{align}
q(0,x) = \begin{cases}
0 & \mbox{if \quad} 0.00 \le x \le 0.25, \\ 
(x-0.25)/0.075 & \mbox{if \quad} 0.25 \le x \le 0.40, \\
2 & \mbox{if \quad} 0.40 \le x \le 0.60, \\
(0.75-x)/0.075 & \mbox{if \quad} 0.60 \le x \le 0.75, \\ 
0 & \mbox{if \quad} 0.75 \le x \le 1.00.
\end{cases}
\end{align} 
This problem was considered by Rossmanith \cite{article:Ro04b}, where it was also used
to test a limiter especially designed to control oscillations in the derivative of $q$.

The solutions and their numerical derivatives computed by WENO-HCL and WENO-HJ schemes are presented in 
Figure~\ref{1dadcp}. Both approaches use $5^{\text{th}}$-order WENO reconstruction in the spatial discretization and SSP-RK4 for the time integrator. We use a CFL number of 1.0 and compute the solution to $t = 4$. 
Shown in this figure  are \ref{1dadcp}(a)  the solution obtained by the WENO-HCL scheme on a mesh with $N=300$,  \ref{1dadcp}(b) the derivative of this solution as computed with a
      $4^{\text{th}}$-order central difference approximation,  
      \ref{1dadcp}(c)  the solution obtained by the WENO-HJ scheme on a mesh with $N=300$,  and
      \ref{1dadcp}(d) the derivative of this solution as computed with a
      $4^{\text{th}}$-order central difference approximation.
Although both solutions agree with the exact solution very well, the computed derivative $q_{,x}(4,x)$ of WENO-HCL is much more oscillatory than that of WENO-HJ. 
The proposed WENO-HJ scheme is able to control unphysical oscillations both in the solution and its derivative.  
The result of our new approach is comparable to that of existing finite volume approaches 
\cite{article:HeRoTa10,article:helzel2013,article:Ro04b}.

\subsection{2D magnetic potential equation}
\label{2dmp}
In the CT framework described in \S \ref{ctmtd}, during Step 1 we must update the magnetic
potential by solving a discrete version of
\begin{equation}
\label{eqn:a3-ham1}
	A^3_{,t} + u^{1}(x,y) \, A^3_{,x} + u^{2}(x,y) \, A^3_{,y} = 0,
\end{equation}
where, as described in Section~\ref{ctmtd}, the velocity components are given functions
from the previous time  step (or time stage in the case of higher-order time-stepping).
Because  $u^1$ and $u^2$ are given, we can view \eqref{eqn:a3-ham1} as a Hamilton--Jacobi
equation:
\begin{equation}
   A^3_{,t} + H\left(x,y,A^3_{,x}, A^3_{,y} \right) = 0, 
\end{equation}
with Hamiltonian:
\begin{equation}
   H\left(x,y,A^3_{,x}, A^3_{,y} \right) = u^{1}(x,y) \, A^3_{,x} + u^{2}(x,y) \, A^3_{,y}.
\end{equation}

To this equation we can directly apply a two-dimensional version of the WENO-HJ scheme
described above (just as with WENO-HCL, the 2D version of WENO-HJ is simply a direction-by-direction
version of the 1D scheme). The 2D semi-discrete WENO-HJ can be written as
\begin{align}
\begin{split}
\label{eqn:semi-a3}
 \frac{d A^3_{ij}(t)}{dt} & = - \hat{H}\left(A^{3-}_{,xij},A^{3+}_{,xij},A^{3-}_{,yij},A^{3+}_{,yij}\right) \\
 = & -u^1_{ij} \left( \frac{A^{3-}_{,xij}+A^{3+}_{,xij}}{2} \right) - u^2_{ij} \left( \frac{A^{3-}_{,yij}+A^{3+}_{,yij}}{2} \right)  \\
& + \alpha^1 \left( \frac{A^{3+}_{,xij}-A^{3-}_{,xij}}{2} \right) + \alpha^2 \left( \frac{A^{3+}_{,yij}-A^{3-}_{,yij}}{2} \right),
\end{split}
\end{align}
where 
\begin{align*}
\alpha^1 = \max_{ij} \left|u^1_{ij}\right| \quad \text{and} \quad
\alpha^2 = \max_{ij} \left|u^2_{ij}\right|.
\end{align*}
The approximations $A^{3\pm}_{,xij}$ and $A^{3\pm}_{,yij}$ are calculated
with formulas analogous to \eqref{dqweno1} and \eqref{dqweno2}. 
$\alpha^1$ and $\alpha^2$ are chosen as the maximal value over all grid points based on a similar idea of the Lax-Friedrichs flux splitting in \S \ref{sdmhd}. 
We remark here the scheme $\eqref{eqn:semi-a3}$ with this global $\alpha^m$ can be too dissipative for certain pure linear Hamilton--Jacobi equations. 
Another obvious choice is to evaluate $\alpha^m$ by taking the maximal value on the local stencil. 
Although this local version of $\alpha^m$ can be much less dissipative for a certain pure Hamilton--Jacobi equation, 
in numerical experiments for MHD we find that the differences between the local and global approaches are negligible. 
Therefore, we will only present the numerical results by the global version of $\alpha^m$ in the numerical examples section (\S \ref{numer}).

Except for the last remaining detail about how the discrete curl of $A$ is computed (see \S \ref{sdcurl}), 
this version of the WENO-HJ scheme coupled with the WENO-HCL scheme as the base scheme completes our 2D finite difference WENO constrained transport method. In the numerical examples section, \S \ref{numer},
we will refer to this full scheme as WENO-CT2D.

\subsection{3D magnetic potential equation}
\label{3dmp}
The evolution equation of the magnetic potential \eqref{3dpte} in 3D is significantly 
different from the evolution equation of the scalar potential \eqref{2dpte} in 2D, 
and hence the scheme discussed in Section \ref{2dmp} cannot be used directly. 
As pointed out in \S \ref{mhd-eq-mq3d}, a key difficulty is the weak hyperbolicity of system \eqref{3dpte}.
 Helzel et al. \cite{article:HeRoTa10} found that the weak hyperbolicity of \eqref{3dpte} is
 only an artifact of freezing the velocity in time, i.e., the full MHD system is still strongly hyperbolic. 
 Furthermore, they found that the magnetic vector potential system can be solved by an operator split
 finite volume scheme with an additional limiting strategy added in certain directions. 
 Helzel et al. \cite{article:helzel2013} handled the weakly hyperblic system \eqref{3dpte}
 through a path-conserving finite volume WENO scheme without appeal to operator splitting. 
 
 In order to explain the scheme advocated in this work, we 
 take inspiration from the operator split method of Helzel et al. \cite{article:HeRoTa10}
and separate system \eqref{3dpte} into two sub-problems:
\begin{description}
\item [{\bf Sub-problem 1:}]
\begin{align}
\label{sub1}
\begin{split}
A^1_{,t} + u^2 A^1_{,y} + u^3 A^1_{,z} = 0,\\
A^2_{,t} + u^1 A^2_{,x} + u^3 A^2_{,z} = 0,\\
A^3_{,t} + u^1 A^3_{,x} + u^2 A^3_{,y} = 0.
\end{split}
\end{align}
\item [{\bf Sub-problem 2:}]
\begin{align}
\label{sub2}
\begin{split}
A^1_{,t} - u^2 A^2_{,x} - u^3 A^3_{,x} = 0,\\
A^2_{,t} - u^1 A^1_{,y} - u^3 A^3_{,y} = 0,\\
A^3_{,t} - u^1 A^1_{,z} - u^2 A^2_{,z} = 0.
\end{split}
\end{align}
\end{description}
We emphasis here that our final scheme will not contain any operator splitting, and
that the division of the magnetic potential evolution equation into the above two
sub-problems is only for the purpose of exposition.

The first sub-problem is a combination of three independent evolution equations, each of which has the same mathematical
form as the 2D scalar evolution equation \eqref{eqn:a3-ham1}. Furthermore, this sub-problem is strongly hyperbolic.
Thus, at least for this sub-problem, we can simply use the WENO-HJ scheme described in Section~\ref{2dmp} to solve 
these three equations independently.

The second sub-problem, \eqref{sub2}, is only weakly hyperbolic.
For this problem we apply a WENO finite difference discretization
using arithmetic averages to define solution derivatives at grid points.
For instance, for the first component $A^1$ in \eqref{sub2}, the semi-discrete form becomes
\begin{align}
\label{smsb}
 \frac{d}{dt}
	A^1_{ijk}(t)
 = u^2_{ijk} \left( \frac{A^{2-}_{,xijk} + A^{2+}_{,xijk}}{2} \right) + u^3_{ijk} \left( \frac{A^{3-}_{,xijk} + A^{3+}_{,xijk}}{2} \right),
\end{align}
where $A^{m\pm}_{,xijk}$ again uses the WENO reconstruction given by \eqref{dqweno1}--\eqref{dqweno2}.
The semi-discrete forms for the other components in \eqref{sub2} are similar.

Note that semi-discrete formula \eqref{smsb} lacks the numerical dissipation
terms found in \eqref{eqn:semi-a3}. 
This is due to the fact that system \eqref{sub2} (by itself)
does not represent a transport equation.
Therefore, the above described discretizations for \eqref{sub1} and \eqref{sub2} generally do not 
introduce sufficient numerical resistivity in order to control unphysical oscillations in the 
magnetic field for a 3D problem.
To be more precise, when solving system \eqref{sub1}, artificial resistivity is introduced
from the WENO upwinding procedure (see formula \eqref{eqn:semi-a3}), but only in 2 of the 3
coordinate directions (e.g., for $A^3$ there is artificial resistivity in the $x$ and $y$-directions). 
When solving system \eqref{sub2}, no additional artificial resistivity is introduced (e.g., see 
\eqref{smsb}).  This lack of numerical dissipation
was also pointed out by Helzel et al. \cite{article:HeRoTa10,article:helzel2013}; they introduced 
explicit artificial resistivity terms into the magnetic vector potential equation. We follow a similar
approach by modifying sub-problem  \eqref{sub2} as follows: 
\begin{description}
\item[{\bf Sub-problem 2 with artificial resistivity:}]
\begin{align}
\label{sbart}
\begin{split}
A^1_{,t} - u^2 A^2_{,x} - u^3 A^3_{,x} = \varepsilon^1 A^1_{,x,x}, \\
A^2_{,t} - u^1 A^1_{,y} - u^3 A^3_{,y} = \varepsilon^2 A^2_{,y,y}, \\
A^3_{,t} - u^1 A^1_{,z} - u^2 A^2_{,z} = \varepsilon^3 A^3_{,z,z}.
\end{split}
\end{align}
\end{description}
These additional terms give us artificial resistivity in the missing directions
(e.g., the equation for $A^3$ now has an artificial resistivity term in the $z$-direction).
In the above expression, the artificial resistivity is take to be of the following form:
\begin{align}
\label{eqn:art_res_1}
\varepsilon^1 = 2 \nu \gamma^1 \frac{\Delta x^2}{\delta + \Delta t},
\end{align}
where $0 \le \delta \ll 1$ is small parameter that can be set to ensure that $\varepsilon^1$ remains
bounded as $\Delta t \rightarrow 0^+$, $\gamma^1$ is the smoothness indicator of $A^1$, and $\nu$ is a constant used to control the magnitude of the 
artificial resistivity.

 In all the simulations presented in this work, we take $\delta = 0$ ($\Delta t$ and $\Delta x$
 have the same order of magnitude for all the problems considered in this work).
 The smoothness indicator $\gamma^1$ is computed as follows:
\begin{align}
\label{eqn:art_res_2}
\gamma^1_{ijk} = \left| \frac{a^-}{a^-+a^+} - \frac{1}{2} \right|,
\end{align}
where
\begin{align}
\label{eqn:art_res_3}
a^- =\left\{ \epsilon + \left( \Dx \, A^{1-}_{,xijk} \right)^2 \right\}^{-2} \quad \text{and} \quad
a^+ =\left\{ \epsilon + \left( \Dx \, A^{1+}_{,xijk} \right)^2 \right\}^{-2},
\end{align}
and $\epsilon$ is taken to be $10^{-8}$ in all of our numerical computations. Here $a^-$ and $a^+$ are used to indicate the smoothness of $A^1$ in each of the $-$ and $+$ WENO stencils, respectively. The artificial resistivity parameters $\epsilon^2$ and $\epsilon^3$ in the other directions can be computed in analogous ways.

The smoothness indicator $\gamma^i$ is designed such that sufficient artificial resistivity is introduced to avoid
spurious oscillations in the derivatives of $A^i$ when $A^i$ is non-smooth, and  high-order accuracy of the scheme is
maintained when $A^i$ is smooth. For the case when $A^1_{,x}$ is smooth:
\begin{align*}
 A^{1-}_{,xijk} -  A^{1+}_{,xijk} = O(\Dx^5) \quad \text{and} \quad \gamma^1_{ijk} = O(\Dx^5).
\end{align*}
In this case the artificial resistivity term in \eqref{sbart} will be of $O(\Dx^6)$, which will not destroy
 the $5^{\text{th}}$-order spatial accuracy of the scheme. For the case when $A^1_{,x}$ is non-smooth:
\begin{align*}
 A^{1-}_{,xijk} -  A^{1+}_{,xijk} = O(1) \quad \text{and} \quad \gamma^1_{ijk} \approx \frac{1}{2},
\end{align*}
which indicates that numerical resistivity should be added. For both the smooth and non-smooth cases, we note that 
$\gamma^1 < \frac{1}{2}$, which means that for forward Euler time stepping, $\nu$ in the range of $[0,0.5]$ will
guarantee that the numerical scheme will be stable up to $\text{CFL}=1$. For the fourth-order 10-stage SSP-RK4 time-stepping
scheme, we found that $\nu$ in the range of $[0.02,0.2]$ seems to satisfactorily control the unphysical
oscillations in 3D problems.

The constrained transport method that we advocate in this work is a method of lines approach, and thus is not consistent with the operator splitting approach. However, through numerical experimentation, we discovered that operator splitting is not
necessary to obtain accurate and stable solutions, as long as the above artificial resistivity limiting strategy is included
in the time evolution. In order to write out the final scheme as advocated, consider for brevity only the first equation
in the magnetic vector potential system with artificial resistivity:
\begin{align} 
 A^1_{,t} - u^2 A^2_{,x} - u^3A^3_{,x} + u^2 A^1_{,y} + u^3 A^1_{,z} = \epsilon^1 A^1_{,x,x}.
\end{align}
Using the above discussion about sub-problems 1 and 2 as a guide, we arrive at the following unsplit
semi-discrete form for the full $A^1$ evolution equation:
\begin{align}
\label{eqn:dA1dt}
\begin{split}
 \frac{d A^1_{ijk}(t)}{dt} = & u^2_{ijk} \left( \frac{A^{2-}_{,xijk} + A^{2+}_{,xijk}}{2} \right) + u^3_{ijk} \left( \frac{A^{3-}_{,xijk} + A^{3+}_{,xijk}}{2} \right) \\
 & + 2 \nu \gamma^1 \left ( \frac{A^{1}_{i-1jk} - 2A^{1}_{ijk} + A^{1}_{i+1jk}}{\delta + \Delta t} \right)\\
 & -u^2_{ijk} \left(\frac{A^{1-}_{,yijk}+A^{1+}_{,yijk}}{2} \right) - u^3_{ijk} \left( \frac{A^{1-}_{,zijk}+A^{1+}_{,zijk}}{2}\right) \\
& + \alpha^2 \, \left( \frac{A^{1+}_{,yijk}-A^{1-}_{,yijk}}{2} \right) + \alpha^3 \, \left (\frac{A^{1+}_{,zijk}-A^{1-}_{,zijk}}{2} \right), \end{split}
\end{align}
where 
\begin{align*}
\alpha^2 = \max_{i,j,k} \left|u^2_{ijk}\right|, \quad \text{and} \quad
\alpha^3 = \max_{i,j,k} \left|u^3_{ijk}\right|.
\end{align*}
The semi-discrete forms for $A^2$ and $A^3$ of the system have analogous forms. For brevity we omit these
formulas.

Except for the last remaining detail about how the discrete curl of $A$ is computed (see \S \ref{sdcurl}), 
this version of the WENO-HJ scheme coupled with the WENO-HCL scheme as the base scheme completes our 3D finite difference WENO constrained transport method. In the numerical examples section, \S \ref{numer},
we will refer to this full scheme as WENO-CT3D.

Finally, we note that the artificial resistivity terms included in the semi-discrete equation \eqref{eqn:dA1dt} with \eqref{eqn:art_res_1},  \eqref{eqn:art_res_2},  and 
\eqref{eqn:art_res_3} are specifically designed for solving the ideal MHD equations \eqref{MHD}.
In future work we will consider non-ideal corrections, including physical resistivity and the Hall
term. In these non-ideal cases, modifications will have to be made to  the artificial resistivity
terms advocated in this work.

\section{Central finite difference discretization of $\nabla \times \Av$}
\label{sdcurl}
During each stage of our CT algorithm, a discrete curl operator is applied to the magnetic potential to give a divergence-free magnetic field. In this section we describe the approach to approximate the curl operator and discuss its important properties.

\subsection{Curl in 2D}
We look for a discrete version of the 2D curl given by \eqref{2dcurl} of the following form:
\begin{align}
  B^1_{ij} := D^{\, y}_{ij} \, A^3 \quad \text{and} \quad B^2_{ij} := -D^{\, x}_{ij} \, A^3,
\end{align}
where $D^{\, x}$ and $D^{\, y}$ are discrete versions of the operators $\partial_x$ and $\partial_y$.
In particular, we look for discrete operators $D^{x}$ and $D^{y}$ with the property that
\begin{align}
   \nabla \cdot \B_{ij} := D^{\, x}_{ij} \, B^1 + D^{\, y}_{ij} \, B^2 = D^{\, x}_{ij} \, D^{\, y}_{ij} \, A^3 - D^{\, y}_{ij} \, D^{\, x}_{ij} \, A^3 = 0,
\end{align}
which means that we satisfy a discrete divergence-free condition.
In the second order accurate, unstaggered, CT methods developed by  Helzel et al. \cite{article:HeRoTa10}, Rossmanith \cite{article:Ro04b}, and T\'oth \cite{article:To00}, the obvious choice for $D^{x}_{ij}$ and $D^{y}_{ij}$
are $2^{\text{nd}}$-order central finite differences.
In this work, in order to maintain high-order accuracy, we replace
$2^{\text{nd}}$-order central differences with $4^{\text{th}}$-order central finite differences:
\begin{align}
 D^{\, x}_{ij} \, A &:= \frac{1}{12\Delta x} \left( A_{i-2 \, j} - 8 A_{i-1 \, j} + 8 A_{i+1 \, j} - A_{i+2 \, j} \right), \\
 D^{\, y}_{ij} \, A &:= \frac{1}{12\Delta y} \left( A_{i \, j-2} - 8 A_{i \, j-1} + 8 A_{i \, j+1} - A_{i \, j+2} \right).
\end{align}

\subsection{Curl in 3D}
We look for a discrete version of the 3D curl of the following form:
\begin{align}
  B^1_{ijk} &:= D^{\, y}_{ijk} \, A^3 - D^{\, z}_{ijk} \, A^2, \\
  B^2_{ijk} &:= D^{\, z}_{ijk} \, A^1 - D^{\, x}_{ijk} \, A^3, \\
  B^3_{ijk} &:= D^{\, x}_{ijk} \, A^2 - D^{\, y}_{ijk} \, A^1.
\end{align}
where $D^{\, x}$, $D^{\, y}$, and $D^{\, z}$ are discrete versions of the operators $\partial_x$, $\partial_y$, and $\partial_z$.
In particular, we look for discrete operators $D^{x}$, $D^{y}$, and $D^{z}$ with the property that
\begin{equation}
\begin{split}
   \nabla \cdot \B_{ijk} := & \, \, D^{\, x}_{ijk} \, B^1 + D^{\, y}_{ijk} \, B^2 + D^{\, z}_{ijk} \, B^3 \\
   = & \, \, D^{\, x}_{ijk} \, D^{\, y}_{ijk} \, A^3 - D^{\, x}_{ijk} \, D^{\, z}_{ijk} \, A^2 
   + D^{\, y}_{ijk} \, D^{\, z}_{ijk} \, A^1 \\ - & \, \, D^{\, y}_{ijk} \, D^{\, x}_{ijk} \, A^3 + D^{\, z}_{ijk} \, D^{\, x}_{ijk} \, A^2
   - D^{\, z}_{ijk} \, D^{\, y}_{ijk} \, A^1 = 0,
 \end{split}
\end{equation}
which means that we satisfy a discrete divergence-free condition.
To achieve higher-order accuracy we again use $4^{\text{th}}$-order central finite differences:
\begin{align}
 D^{\, x}_{ijk} \, A &:= \frac{1}{12\Delta x} \left( A_{i-2 \, j k} - 8 A_{i-1 \, j k} + 8 A_{i+1 \, j k} - A_{i+2 \, j k} \right), \\
 D^{\, y}_{ijk} \, A &:= \frac{1}{12\Delta y} \left( A_{i \, j-2 \, k} - 8 A_{i \, j-1 \, k} + 8 A_{i \, j+1 \, k} - A_{i \, j+2 \, k} \right), \\
 D^{\, z}_{ijk} \, A &:= \frac{1}{12\Delta z} \left( A_{i \, j \, k-2} - 8 A_{i \, j \, k-1} + 8 A_{i \, j \, k+1} - A_{i \, j \, k+2} \right).
\end{align}

\subsection{Important properties}
For smooth solutions, the spatial accuracy of our overall scheme will be $4^{\text{th}}$-order accurate.
This fact is confirmed via numerical experiments in Section~\ref{numer}.
Furthermore, for solutions with discontinuities in the magnetic field, the $4^{\text{th}}$-order central 
discretization of the magnetic potential curl will introduce spurious oscillations. However, as we demonstrated
via numerical experiments in Section~\ref{sdmhd}, we are able to control  any unphysical oscillations in the magnetic fields
through  the limiting strategy that was designed in \S \ref{sdmpe} for the WENO-HJ scheme.

Finally, we point out the following property of the proposed scheme:
\begin{clm}
The constrained transport method as described in this work globally conserves the magnetic field from one Runge--Kutta stage to the next.
\end{clm}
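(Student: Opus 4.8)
The plan is to track the discrete global integral of each magnetic field component, i.e.\ the grid sum $\sum_{ijk} B^m_{ijk}$ (with the analogous double sum in 2D), and to show that this quantity is identical at the end of consecutive Runge--Kutta stages. The essential structural observation is that, by Step 2 of the constrained transport framework in \S \ref{ctmtd}, at the conclusion of every stage the predicted magnetic field is \emph{overwritten} by the discrete curl of the magnetic potential, $\B = \curl Q_{\text{A}}$, which is built from the $4^{\text{th}}$-order central difference operators $D^{\, x}, D^{\, y}, D^{\, z}$ of this section. Consequently it suffices to prove that the grid sum of each component of this discrete curl vanishes; conservation from stage to stage then follows immediately.

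First I would establish the key lemma that the central difference operators telescope: for any grid function $A$ and (say) periodic boundary conditions,
\begin{align}
\sum_{i} D^{\, x}_{ijk} \, A = \frac{1}{12 \Dx} \sum_{i} \left( A_{i-2\,jk} - 8 A_{i-1\,jk} + 8 A_{i+1\,jk} - A_{i+2\,jk} \right) = 0,
\end{align}
and similarly for $D^{\, y}$ summed over $j$ and $D^{\, z}$ summed over $k$. This holds because, after reindexing each shifted partial sum, every grid value is multiplied by the total stencil weight $1 - 8 + 8 - 1 = 0$.

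With this lemma in hand, I would sum each curl component over the entire grid. In 3D, for example,
\begin{align}
\sum_{ijk} B^1_{ijk} = \sum_{ijk} \left( D^{\, y}_{ijk} \, A^3 - D^{\, z}_{ijk} \, A^2 \right) = 0,
\end{align}
since $\sum_{ijk} D^{\, y}_{ijk} A^3 = \sum_{ik}\bigl(\sum_{j} D^{\, y}_{ijk} A^3\bigr) = 0$ and likewise for the $D^{\, z}$ term; the components $B^2$ and $B^3$ are handled identically, as is the 2D pair $B^1 = D^{\, y} A^3$, $B^2 = -D^{\, x} A^3$. Hence $\sum_{ijk} B^m_{ijk} = 0$ at the end of every Runge--Kutta stage, so this global quantity is unchanged from one stage to the next, which is the asserted conservation. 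Equivalently, and more robustly, the increment $\sum_{ijk}\bigl(B^{m}\big|_{s+1} - B^{m}\big|_{s}\bigr) = \sum_{ijk}\curl\bigl(Q_{\text{A}}\big|_{s+1} - Q_{\text{A}}\big|_{s}\bigr)$ telescopes to zero, so conservation does not even rely on the sum being exactly zero.

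The main obstacle is the treatment of the boundaries: the telescoping identity is exact only when the boundary contributions cancel, which is automatic on periodic domains but must be argued separately (or absorbed into the ghost-cell prescription) for other boundary conditions. A secondary point worth verifying is that the cancellation is genuinely a property of the \emph{curl} structure and not of any particular stage---it uses only that each $B^m$ is a difference of two directional central differences, so the update of $Q_{\text{A}}$ itself plays no role once the curl is taken, and the argument therefore applies verbatim at every stage of Algorithm~\ref{alg:ssp-rk}.
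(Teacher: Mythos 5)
Your proposal is correct and follows essentially the same route as the paper: the paper's (omitted) proof, borrowed from Rossmanith \cite{article:Ro04b}, rests on exactly the observation you formalize, namely that the discrete curl is built from central difference operators whose grid sums telescope, so each component of $\B$ can change only through contributions at the physical boundary. Your write-up in fact supplies the details the paper omits, including the correct caveat that on non-periodic domains the telescoping leaves boundary terms, which is precisely the ``loss or gain across its physical boundary'' allowed in the paper's statement.
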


\begin{proof}
Using the same idea as the proof of the conservation of $\B$ in Rossmanith \cite{article:Ro04b}, we can show the total amount of each component of $\B$ can be modified only through loss or gain across its physical boundary. Thus the components of the magnetic field are globally conserved. We omit the details of proof here.
\end{proof}


\section{Numerical Results}
\label{numer}

In this section, the 2D and 3D WENO-CT schemes are applied to several MHD problems. First, both the 2D and 3D schemes are tested on the 2D and 3D smooth Alfv\'en wave problems, respectively.
These problems are used to demonstrate that the proposed methods are fourth-order accurate. 
The scheme is also tested on a rotated shock tube problem in order to examine the shock-capturing ability of the method,
as well as to demonstrate the success in controlling divergence errors. Also considered are the
 2D Orszag-Tang vortex problem and  the 2D, 2.5D, and 3D versions of the cloud-shock interaction problem. For all the examples computed below, the gas constant is $\gamma = 5/3$ and the CFL number is $3.0$.

\subsection{Smooth Alfv\'en wave problem}
We first consider 2D and 3D versions of the smooth Alfv\'en wave problem to demonstrate the order of accuracy of the proposed schemes. 

\subsubsection{2D problem}
We perform a convergence study of the 2D scheme for the 2D smooth Alfv\'en wave problem.  The initial conditions and the computational domain for this version are described in detail in several papers
(e.g., \S 6.1.1 on page 3818 of Helzel et al. \cite{article:HeRoTa10}).
The $L_2$-errors and $L_\infty$-errors of the magnetic field and the magnetic scalar potential are shown in Tables~\ref{tab:2d2} and \ref{tab:2dinf}. Fourth-order
convergence rates of all the quantities are observed when $\text{CFL} = 3.0$, which confirms the temporal and spatial order of accuracy. 
\begin{table}
\begin{center}
\begin{Large}
    \begin{tabular}{|c||c||c||c||c|}
    \hline
     {\normalsize {\bf Mesh}} & {\normalsize \bf Error in $B^1$} & {\normalsize \bf Error in $B^2$} & {\normalsize \bf Error in $B^3$} & {\normalsize \bf Error in $A^3$}   \\ \hline\hline
    {\normalsize $16 \times 32$} & {\normalsize $9.727\times 10^{-5}$} & {\normalsize $2.133\times 10^{-4}$} & {\normalsize $2.828\times 10^{-4}$} & {\normalsize $3.028\times 10^{-5}$} \\ \hline
    {\normalsize $32 \times 64$} & {\normalsize $4.072\times 10^{-6}$} & {\normalsize $9.343\times 10^{-6}$} & {\normalsize $9.394\times 10^{-6}$} & {\normalsize $1.365\times 10^{-6}$} \\ \hline
    {\normalsize $64 \times 128$} & {\normalsize $2.020\times 10^{-7}$} & {\normalsize $4.618\times 10^{-7}$}  & {\normalsize $2.881\times 10^{-7}$} & {\normalsize $7.071\times 10^{-8}$} \\ \hline
    {\normalsize $128 \times 256$} & {\normalsize $1.170\times 10^{-8}$} & {\normalsize $2.596\times 10^{-8}$}  & {\normalsize $9.295\times 10^{-9}$} & {\normalsize $4.138\times 10^{-9}$} \\ \hline 
    {\normalsize $256 \times 512$} & {\normalsize $7.150\times 10^{-10}$} & {\normalsize $1.550\times 10^{-9}$}  & {\normalsize $3.395\times 10^{-10}$} & {\normalsize $2.537\times 10^{-10}$} \\ \hline
    {\normalsize $512 \times 1024$} & {\normalsize $4.439\times 10^{-11}$} & {\normalsize $9.496\times 10^{-11}$}  & {\normalsize $1.546\times 10^{-11}$} & {\normalsize $1.577\times 10^{-11}$} \\ \hline    
    \hline
    {\normalsize {\bf EOC}} &  	{\normalsize 4.009}	&    {\normalsize 4.029} & {\normalsize 4.457} &    {\normalsize 4.008} \\
    \hline
    \end{tabular}
    \caption{Convergence study of the 2D Alfv\'en wave for $\text{CFL} = 3.0$. Shown are the $L_{2}$-errors at time $t = 1$ of the magnetic field and magnetic potential as computed by the WENO-CT2D scheme at various grid resolutions. 
    The magnetic field and the magnetic potential values converge at fourth-order accuracy. The experimental order of convergence ({\bf EOC}) is computed by comparing the error for the two finest grids.     \label{tab:2d2}}
    \end{Large}
\end{center}
\end{table}
\begin{table}
\begin{center}
\begin{Large}
    \begin{tabular}{|c||c||c||c||c|}
    \hline
     {\normalsize {\bf Mesh}} & {\normalsize \bf Error in $B^1$} & {\normalsize \bf Error in $B^2$} & {\normalsize \bf Error in $B^3$} & {\normalsize \bf Error in $A^3$}   \\ \hline\hline
    {\normalsize $16 \times 32$} & {\normalsize $2.703\times 10^{-4}$} & {\normalsize $5.793\times 10^{-4}$} & {\normalsize $7.324\times 10^{-4}$} & {\normalsize $6.981\times 10^{-5}$} \\ \hline
    {\normalsize $32 \times 64$} & {\normalsize $1.087\times 10^{-5}$} & {\normalsize $2.467\times 10^{-5}$} & {\normalsize $2.592\times 10^{-5}$} & {\normalsize $3.077\times 10^{-6}$} \\ \hline
    {\normalsize $64 \times 128$} & {\normalsize $4.812\times 10^{-7}$} & {\normalsize $1.091\times 10^{-6}$}  & {\normalsize $7.941\times 10^{-7}$} & {\normalsize $1.564\times 10^{-7}$} \\ \hline
    {\normalsize $128 \times 256$} & {\normalsize $2.729\times 10^{-8}$} & {\normalsize $6.064\times 10^{-8}$}  & {\normalsize $2.501\times 10^{-8}$} & {\normalsize $9.209\times 10^{-9}$} \\ \hline 
    {\normalsize $256 \times 512$} & {\normalsize $1.646\times 10^{-9}$} & {\normalsize $3.615\times 10^{-9}$}  & {\normalsize $8.133\times 10^{-10}$} & {\normalsize $5.670\times 10^{-10}$} \\ \hline
    {\normalsize $512 \times 1024$} & {\normalsize $1.022\times 10^{-10}$} & {\normalsize $2.221\times 10^{-10}$}  & {\normalsize $3.614\times 10^{-11}$} & {\normalsize $3.533\times 10^{-11}$} \\ \hline    
    \hline
    {\normalsize {\bf EOC}} &  	{\normalsize 4.009}	&    {\normalsize 4.025} & {\normalsize 4.492} &    {\normalsize 4.005} \\
    \hline
    \end{tabular}
    \caption{Convergence study of the 2D Alfv\'en wave for $\text{CFL} = 3.0$. Shown are the $L_{\infty}$-errors at time $t = 1$ of the magnetic field and magnetic potential as computed by the WENO-CT2D scheme at various grid resolutions. 
    The magnetic field and the magnetic potential values converge at fourth-order accuracy. The experimental order of convergence ({\bf EOC}) is computed by comparing the error for the two finest grids.     \label{tab:2dinf}}
    \end{Large}
\end{center}
\end{table}

\subsubsection{3D problem}
We also perform a convergence study of the 3D scheme on a 3D variant of the smooth Alfv\'en wave problem. The initial conditions and the computational domain for this version are described in detail in Helzel et al. \cite{article:HeRoTa10}
(page 3819 in \S 6.2.1). The results of the 3D scheme are presented in Tables \ref{tab:3d2} and \ref{tab:3dinf}. Fourth-order accuracy in all components are confirmed by this test problem.

\begin{table}
\begin{center}
\begin{Large}
    \begin{tabular}{|c||c||c||c|}
    \hline
    {\normalsize {\bf Mesh}} & {\normalsize \bf Error in $B^1$} & {\normalsize \bf Error in $B^2$} & {\normalsize \bf Error in $B^3$}    \\ \hline \hline
    {\normalsize $16 \times 32 \times 32$} & {\normalsize $6.918 \times 10^{-5}$}   & {\normalsize $1.156 \times 10^{-4}$} & {\normalsize $1.062 \times 10^{-4}$} \\ \hline
    {\normalsize $32 \times 64 \times 64$} & {\normalsize $2.734 \times 10^{-6}$}   & {\normalsize $4.984 \times 10^{-6}$} & {\normalsize $4.271 \times 10^{-6}$} \\ \hline
    {\normalsize $64 \times 128 \times 128$} & {\normalsize 1.278 $\times 10^{-7}$}  & {\normalsize $2.454 \times 10^{-7}$} & {\normalsize $2.065 \times 10^{-7}$} \\ \hline \hline
    {\normalsize {\bf EOC}} &  {\normalsize 4.419}	&  	{\normalsize 4.344}	&   {\normalsize 4.370}  \\
    \hline
    \end{tabular}

    \bigskip

    \begin{tabular}{|c||c||c||c|}
    \hline
   {\normalsize {\bf Mesh}}  & {\normalsize \bf Error in $A^1$} & {\normalsize \bf Error in $A^2$} & {\normalsize \bf Error in $A^3$}    \\ \hline \hline
    {\normalsize $16 \times 32 \times 32$} & {\normalsize $6.361\times 10^{-6}$}   & {\normalsize $1.422 \times 10^{-5}$} & {\normalsize $1.532 \times 10^{-5}$} \\ \hline
    {\normalsize $32 \times 64 \times 64$} & {\normalsize $3.308\times 10^{-7}$}   & {\normalsize $5.822 \times 10^{-7}$} & {\normalsize $6.314 \times 10^{-7}$} \\ \hline
    {\normalsize $64 \times 128 \times 128$} & {\normalsize $1.841 \times 10^{-8}$} & {\normalsize $2.878 \times 10^{-8}$} & {\normalsize $3.061 \times 10^{-8}$} \\ \hline \hline
    {\normalsize {\bf EOC}} & {\normalsize 4.167} & {\normalsize 4.338} &  {\normalsize 4.367}   \\
    \hline
    \end{tabular}
    \caption{Convergence study of the 3D Alfv\'en wave for $\text{CFL} = 3.0$. Shown are the $L_{2}$-errors at time $t = 1$ of all the magnetic field and magnetic potential components as computed by the WENO-CT3D scheme on various 
    grid resolutions. The magnetic field and magnetic potential values converge at fourth-order accuracy. The experimental order of convergence ({\bf EOC}) is computed by comparing the error for the two finest grids. \label{tab:3d2}}
\end{Large}
\end{center}
\end{table}
\begin{table}
\begin{center}
\begin{Large}
    \begin{tabular}{|c||c||c||c|}
    \hline
    {\normalsize {\bf Mesh}} & {\normalsize \bf Error in $B^1$} & {\normalsize \bf Error in $B^2$} & {\normalsize \bf Error in $B^3$}    \\ \hline \hline
    {\normalsize $16 \times 32 \times 32$} & {\normalsize $3.074 \times 10^{-4}$}   & {\normalsize $5.469 \times 10^{-4}$} & {\normalsize $5.467 \times 10^{-4}$} \\ \hline
    {\normalsize $32 \times 64 \times 64$} & {\normalsize $1.202 \times 10^{-5}$}   & {\normalsize $2.099 \times 10^{-5}$} & {\normalsize $1.799 \times 10^{-5}$} \\ \hline
    {\normalsize $64 \times 128 \times 128$} & {\normalsize $5.092\times 10^{-7}$}  & {\normalsize $9.562 \times 10^{-7}$} & {\normalsize $8.213 \times 10^{-7}$} \\ \hline \hline
    {\normalsize {\bf EOC}} &  {\normalsize 4.561}	&  	{\normalsize 4.456}	&   {\normalsize 4.453}  \\
    \hline
    \end{tabular}

    \bigskip

    \begin{tabular}{|c||c||c||c|}
    \hline
   {\normalsize {\bf Mesh}}  & {\normalsize \bf Error in $A^1$} & {\normalsize \bf Error in $A^2$} & {\normalsize \bf Error in $A^3$}    \\ \hline \hline
    {\normalsize $16 \times 32 \times 32$} & {\normalsize $3.041\times 10^{-5}$}   & {\normalsize $5.240 \times 10^{-5}$} & {\normalsize $6.238 \times 10^{-5}$} \\ \hline
    {\normalsize $32 \times 64 \times 64$} & {\normalsize $1.280\times 10^{-6}$}   & {\normalsize $2.237 \times 10^{-6}$} & {\normalsize $2.452 \times 10^{-6}$} \\ \hline
    {\normalsize $64 \times 128 \times 128$} & {\normalsize $6.717 \times 10^{-8}$} & {\normalsize $1.111 \times 10^{-7}$} & {\normalsize $1.156 \times 10^{-7}$} \\ \hline \hline
    {\normalsize {\bf EOC}} & {\normalsize 4.252} & {\normalsize 4.331} &  {\normalsize 4.407}   \\
    \hline
    \end{tabular}
    \caption{Convergence study of the 3D Alfv\'en wave for $\text{CFL} = 3.0$. Shown are the $L_{\infty}$-errors at time $t = 1$ of all the magnetic field and magnetic potential components as computed by the WENO-CT3D scheme on various 
    grid resolutions. The magnetic field and magnetic potential values converge at fourth-order accuracy. The experimental order of convergence ({\bf EOC}) is computed by comparing the error for the two finest grids. \label{tab:3dinf}}
\end{Large}
\end{center}
\end{table}

\subsection{2D rotated shock tube problem}
\label{section:2drs}
Next we consider a 1D shock tube problem rotated by an angle of $\alpha$ in a 2D domain. The initial conditions are
\begin{align}
(\rho, u_{\perp},u_{\parallel},u^3,p,B_{\perp},B_{\parallel},B^3) =
\begin{cases}(1,-0.4, 0, 0, 1, 0.75,1,0) & \mbox{if} \quad \xi<0, \\ 
( 0.2, -0.4, 0, 0, 0.1, 0.75,-1,0) & \mbox{if} \quad \xi>0, \end{cases}
\end{align}
where 
\begin{equation}
\xi = x \cos \alpha + y \sin \alpha \quad \text{and} \quad \eta = -x \sin \alpha + y \cos \alpha,
\end{equation}
and $u_{\perp}$ and $B_{\perp}$ are vector components that are perpendicular to the shock interface, and $u_{\parallel}$ and $B_{\parallel}$ are components that are parallel to the shock interface. The initial magnetic potential is
\begin{align}
A^3(0,\xi) =
\begin{cases} 0.75 \, \eta - \xi & \mbox{if} \quad \xi \le 0, \\ 
 0.75 \, \eta + \xi & \mbox{if}  \quad \xi \ge 0. \end{cases}
\end{align}

We solve this problem on the computational domain $(x,y) \in [-1.2,1.2] \times [-1,1]$  with a $180 \times 150$ mesh.
Zero-order extrapolation boundary conditions are used on the left and right boundaries.
On the top and bottom boundaries all the conserved quantities are extrapolated in the direction parallel to the shock interface. 
In addition, to be consistent with zero-order extrapolation boundary condition on $\B$, the linear extrapolation of the magnetic potential $A^3$ is used along the corresponding directions.

Shown in Figure~\ref{2drs} are the density contours of the solutions as computed using the base WENO scheme and the WENO-CT2D scheme. From this figure we note that the solution from the base scheme suffers
from unphysical oscillations that are due to to divergence errors in the magnetic field, while the WENO-CT2D does not
suffer from this problem.
In Figures~\ref{1drs} and~\ref{1drsb} we also present a comparison of the 2D solutions along $y=0$ compared
against a 1D WENO5 solution on a mesh with $N=5000$.  From these figures it is again clear that the base WENO scheme produces extra oscillations and larger errors, while the solution by the WENO-CT2D scheme are in good agreement with the 1D solution.

\subsection{2D Orszag-Tang vortex}
Next we consider the Orszag-Tang vortex problem, which is widely considered as a standard test example for
MHD in the literature (e.g.\ \cite{ article:DaWo98, article:Ro04b, article:To00, article:ZaMaCo94}), since
the solution at late times in the simulation is quite sensitive to divergence errors. 
The problem has a smooth initial condition on the double-periodic box $[0,2 \pi] \times [0, 2\pi]$:
\begin{gather}
\rho(0,x,y)  = \gamma^2, \quad u^1(0,x,y)  = -\sin(y),  \quad u^2 (0,x,y)  = \sin(x),  \\
p(0,x,y)  = \gamma,  \quad B^1(0,x,y)  = -\sin(y),  \quad B^2(0,x,y)  =\sin(2x), \\
 u^3(0,x,y)  = B^3(0,x,y) = 0,
\end{gather}
with the initial magnetic potential:
\begin{align}
A^3(0,x,y) = 0.5 \cos(2x) + \cos(y).
\end{align}
Periodic boundary conditions are imposed on all the boundaries. As time evolves, the solution forms
several shock waves and a vortex structure in the middle of the computational domain.

We solve the MHD equations on a $192 \times 192$ mesh with the WENO-CT2D scheme. In Figure~\ref{OT4}, we present the contours of density at $t=0.5$, 2, 3, and 4. A slice of the pressure at $y =0.625\pi$ and $t=3$ is shown in the right panel of Figure~\ref{pre2d}. Although different papers display the solution at different times and resolutions, our results
are in good agreement with those given in \cite{article:DaWo98, article:Ro04b, article:To00, article:ZaMaCo94}. 
We did not observe significant oscillations in any of the conserved quantities, even when the system is evolved out
to long times.
Our simulation was successfully run to $t=30$ without the introduction of negative pressure anywhere
 in the computational domain. On the other hand, the base scheme without CT step produces negative pressures 
 at around $t=4.0$ on a $192 \times 192$ mesh.



\subsection{Cloud-shock interaction}
Finally we consider the so-called cloud-shock interaction problem, which involves a strong shock interacting with a dense cloud that is in hydrostatic equilibrium. For this problem, we consider the 2D, 2.5D, and 3D versions of the proposed method. 
The 2D and 2.5D versions have the same physical setup. However, 2D means the problem is solved using the WENO-CT2D scheme, and 2.5D means that the magnetic field and the potential are solved using the WENO-CT3D scheme, i.e., with all the components of the magnetic field updated, although all the quantities are still independent of  $z$.

\subsubsection{2D problem}
In this version we consider an MHD shock propagating toward a stationary bubble, with the same setup as the one in \cite{article:Ro04b}. The computation domain is $(x,y) \in [0,1] \times [0, 1]$ with inflow boundary condition at $x=0$ and outflow boundary conditions on the three other sides. The initial conditions consist of a shock initialized at $x=0.05$:
\begin{equation}
\label{2dcs0}
\begin{split}
& \hspace{-3mm} (\rho, u^1, u^2, u^3, p, B^1, B^2, B^3)(0,x,y) \\
& \hspace{-3mm}  = \begin{cases} ( 3.86859, 11.2536, 0, 0, 167.345, 0, 2.1826182, -2.1826182) & \mbox{if} \quad x<0.05, \\ (1, 0, 0, 0, 1, 0, 0.56418958, 0.56418958) & \mbox{if} \quad x > 0.05, \end{cases}
\end{split}
\end{equation}
and a circular cloud of density $\rho = 10$ and radius $r=0.15$ centered at $(x,y) = (0.25,0.5)$. The initial scalar magnetic potential is given by
\begin{align}
A^3 (0,x,y) =  \begin{cases}  -2.1826182(x-0.05) & \mbox{if} \quad x \le 0.05, \\ -0.56418958(x-0.05) & \mbox{if} \quad x\ge 0.05. \end{cases}
\end{align}

The solution is computed on a $256 \times 256$ mesh. Shown in Figure~\ref{2dcs} are schlieren plots of $\ln \rho$ and $\|\B \|$. In general, the solution shows good agreement with the one in \cite{article:Ro04b}, although
the WENO-CT2D schemes shows some higher-resolution features. There is a noticeable extra structure that can be observed  around $x=0.75$ of the density plot (see Figure~\ref{2dcs}(a)). We also find that when the resolution of the solution 
using the $2^{\text{nd}}$-order finite volume solver \cite{article:Ro04b} is doubled to a mesh of $512\times 512$, a similar structure starts to appear\footnote{This test was done using freely available MHDCLAW \cite{mhdclaw} code.}. 
A similar structure can be observed in the solution of Dai and Woodward (Figure 18 in \cite{article:DaWo98}) on a $512 \times 512$ mesh, although their problem setup is slightly different (i.e., they used a stationary shock instead of a stationary bubble). From another perspective, a similar structure around $x=0.75$ can always be observed in the schlieren plots of $\|\B \|$ solved by different methods. Due to those facts, it is reasonable to believe that the solution should consist of this structure and that the high-order solver with less numerical dissipation can obtain this structure with fewer grid points than low-order methods. 

\subsubsection{2.5D problem}
We also consider the 2D version problem with the magnetic field solved by WENO-CT3D scheme so as to compare our 3D and 2D schemes. We call this problem the 2.5D version to be consistent with  \cite{article:HeRoTa10, article:helzel2013}. The problem is initialized in 2.5D with the same initial conditions as the 2D version $\eqref{2dcs0}$. However, as discussed in Section~\ref{sdmpe}, the magnetic potential evolutions of 2D and 3D are significantly different. 

In 2.5D, since all quantities are independent of $z$, the magnetic vector potential satisfies the following evolution equation
\begin{align}
\begin{split}
& A^1_{,t} - u^2 A^2_{,x} - u^3 A^3_{,x}+ u^3 A^1_{,y} = 0, \\
& A^2_{,t} + u^1 A^2_{,x} - u^1A^1_{,y} - u^3 A^3_{,y} = 0, \\
& A^3_{,t} + u^1 A^3_{,x} + u^2 A^3_{,y} = 0.
\end{split}
\end{align}
The magnetic field satisfies
\begin{align}
B^1  = A^3_{,y}, \quad B^2  = -A^3_{,x}, \quad \text{and} \quad B^3  = A^2_{,x} - A^1_{,y}. \label{2.5db3}
\end{align}
 For this version of the cloud-shock problem, the magnetic vector potential is initialized as follows:
\begin{align}
\label{A25d}
\hspace{-4mm}
\Av(0,x,y) = \begin{cases}  (0, -2.1826182(x-0.05), -2.1826182(x-0.05) )^T & \mbox{if} \quad x\le 0.05, \\ (0, 0.56418958(x-0.05), -0.56418958(x-0.05))^T & \mbox{if} \quad x\ge 0.05. \end{cases}
\end{align}

The difference between the 2D and 2.5D schemes is essentially that $B^3$ is not corrected in the CT step in the 2D scheme, while in the 2.5D we update $B^3$ by partial derivatives of $A^1$ and $A^2$ as described above. It is also worthwhile to point out that $B^3_{,z}$ is identically zero in this case, so the update of $B^3$ in $\eqref{2.5db3}$ will not influence the divergence error. In the end, the two approaches produce very similar results.
Shown in Figure~\ref{2dvs25d} are contour plots of $B^3$ using
the two different approaches. For the WENO-CT3D scheme the diffusive limiter described in Section~\ref{3dmp} was used with $\nu = 0.1$.

Although these methods compute $B^3$ in the very different ways, the two solutions in Figure~\ref{2dvs25d} are in good agreement. This result gives us confidence that the proposed 3D scheme is able to solve the problem even with strong shocks. In addition, there are no significant oscillations observed in the 2.5D solution. These results also compare well with
the results of the split finite volume CT approach of \cite{article:HeRoTa10} and by the unsplit MOL finite volume CT approach of \cite{article:helzel2013}.

\subsubsection{3D problem}
The last problem we consider is a fully 3D version of the cloud-shock interaction problem. The initial conditions consist of a shock initialized at $x=0.05$:
\begin{equation}
\begin{split}
& \hspace{-3mm} (\rho, u^1, u^2, u^3, p, B^1, B^2, B^3)(0,x,y,z) = \\
&  \hspace{-3mm} \begin{cases} ( 3.86859, 11.2536, 0, 0, 167.345, 0, 2.1826182, -2.1826182) & \mbox{if} \quad x<0.05, \\ (1, 0, 0, 0, 1, 0, 0.56418958, 0.56418958) & \mbox{if} \quad  x > 0.05, \end{cases}
\end{split}
\end{equation}
and a spherical cloud of density $\rho = 10$ and radius $r=0.15$ centered at $(x,y,z) = (0.25,0.5,0.5)$. The initial conditions for the magnetic potential are the same as $\eqref{A25d}$. The solution is computed on the domain of $[0,1]^3$. Inflow boundary conditions are  imposed at $x=0$ and outflow boundary conditions are used on all other sides. The solution is computed on a $128 \times 128 \times 128$ mesh using the WENO-CT3D scheme. In Figure~\ref{den3d} we show the density of the solution at $t=0.06$, which is in good agreement with the solution in \cite{article:HeRoTa10, article:helzel2013},
although our solution shows less oscillations and higher-resolution compared with previous work. We again observe an extra structure in the density plot, same as in our solution of the 2D problem.  Here the diffusive limiter as described in Section~\ref{3dmp} was used with $\nu = 0.1$.


\section{Conclusions}

In this paper we developed a class of finite difference methods for solving the 2D and 3D ideal MHD equations. 
These  methods are based on high-order finite difference spatial discretizations coupled to high-order
strong stability-preserving Runge--Kutta time-stepping schemes. In order to obtain a discretization of ideal MHD
that exactly respects a discrete form of the divergence-free condition on the magnetic field, a class of novel finite difference schemes based on the WENO approach was developed to solve the evolution equations of the scalar potential in 2D and vector potential in 3D. In particular, an artificial resistivity limiter approach was introduced for 3D problems in order to control the unphysical oscillations in the magnetic field. Overall, the resulting schemes are 4$^\text{th}$-order accurate in space and time for the MHD system.

The numerical methods were tested on several 2D, 2.5D, and 3D test problems, all of which demonstrate
the robustness of our approach. On smooth problems, we achieve fourth-order accuracy in all components, including
the magnetic field and the magnetic potential. For problems with shocks, we are able to accurately capture the shock waves
without introducing unphysical oscillations in any of the solution components. In addition, the cloud-shock interaction problems
also indicated that there is a possible advantage of using a high-order method compared to traditional $2^{\text{nd}}$-order methods. For instance, using a $128 \times 128$ mesh in our methods, we are able to see the same structures that
can only be observed by a 2$^\text{nd}$-order finite volume methods on much finer grid resolutions. This phenomenon is 
observed in both 2D and 3D.
Another advantage of the proposed methods is that they do not involve any multidimensional reconstructions in any step, while traditional high-order finite volume methods commonly need several multidimensional reconstructions in each grid cell. 
For instance,  for the same resolution on a 3D simulation, our finite difference code uses less CPU time than the $3^\text{rd}$-order finite volume code in \cite{article:helzel2013}.

The numerical schemes as developed so far can only be used to solve problems on either a uniform grid or 
on a smoothly varying mapped grid, which is a common disadvantage of high-order ENO/WENO schemes. Thus, our methods are less flexible compared to the finite volume CT methods developed in \cite{article:helzel2013}, in which the scheme has been successfully extended to non-smoothly varying grids. However, a promising approach for overcoming this restriction is to
place the existing WENO-CT method for ideal MHD into an adaptive mesh refinement (AMR) framework. Since our methods
are fully explicit and fully unstaggered, it is possible to incorporate them into the WENO-AMR framework developed by Shen et al. \cite{article:ShQiCh}. We will focus on this generalization in the future work.

\bigskip

\noindent
{\bf Acknowledgements.}
The authors would like to thank Prof. Yingda Cheng for several useful discussions, as
well as the anonymous reviewers for their valuable comments.
AJC was supported by AFOSR grants FA9550-11-1-0281, FA9550-12-1-0343, and FA9550-12-1-0455, NSF grant DMS-1115709, MSU foundation grant SPG-RG100059, and by ORNL under an HPC LDRD. JAR was supported in part by NSF grant DMS-1016202.


\begin{figure}
\begin{center}
\begin{tabular}{cc}
  	(a)\includegraphics[width=0.42\textwidth]{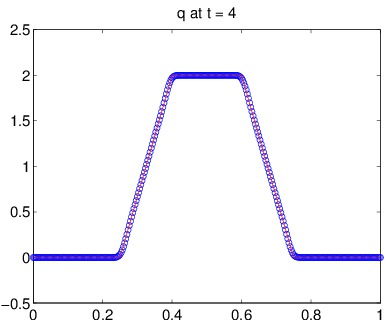} & 
	(b)\includegraphics[width=0.42\textwidth]{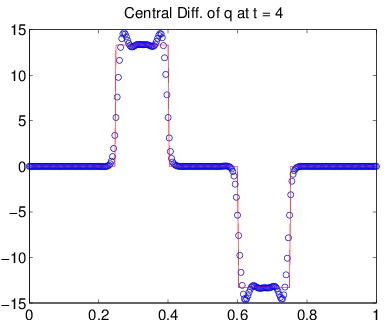} \\
	(c)\includegraphics[width=0.42\textwidth]{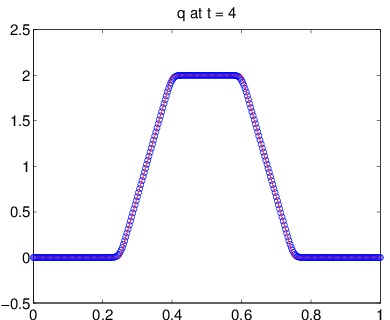} & 
	(d)\includegraphics[width=0.42\textwidth]{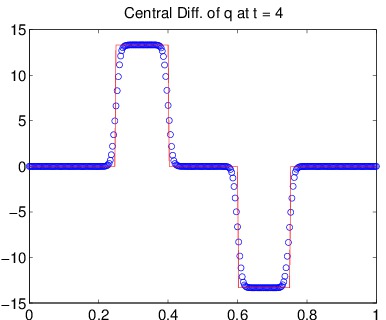}
\end{tabular}
      \caption{The solution to the 1D advection equation on $[0,1]$ with periodic boundary conditions at $t=4$ (i.e., 
      after four full revolutions).  Shown in these panels  are (a)  the solution obtained by the WENO-HCL scheme on a mesh with $N=300$,  (b) the derivative of this solution as computed with a
      $4^{\text{th}}$-order central difference approximation,  
      (c)  the solution obtained by the WENO-HJ scheme on a mesh with $N=300$,  and
      (d) the derivative of this solution as computed with a
      $4^{\text{th}}$-order central difference approximation.
      The proposed WENO-HJ scheme
      is able to control unphysical oscillations both in the solution and its derivative.      
      \label{1dadcp}}
\end{center}
\end{figure}

\begin{figure}
\begin{center}
\begin{tabular}{cc}
(a)\includegraphics[width=0.42\textwidth]{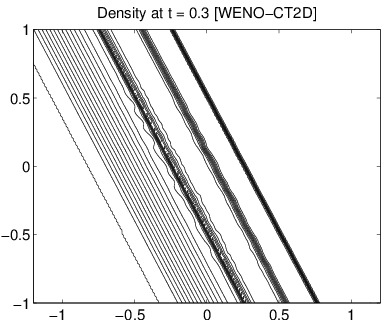} &
(b)\includegraphics[width=0.42\textwidth]{den2d}
\end{tabular}
      \caption{The rotated shock tube problem. Shown in these panels are density contours of (a) the base WENO scheme (i.e., no constrained transport) and (b) the WENO-CT2D scheme. In this case, $\alpha = \tan^{-1} (0.5)$. 30 equally spaced contours are shown for each graph in the ranges $\rho \in [0.1795, 1]$. \label{2drs}}
\end{center}
\end{figure}

\begin{figure}
\begin{center}
\begin{tabular}{cc}
(a)\includegraphics[width=0.42\textwidth]{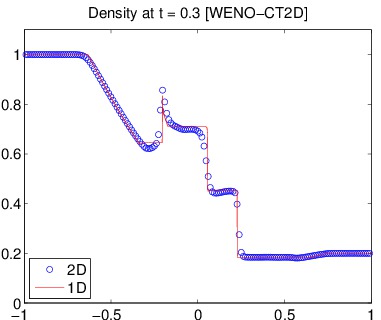} &
(b)\includegraphics[width=0.42\textwidth]{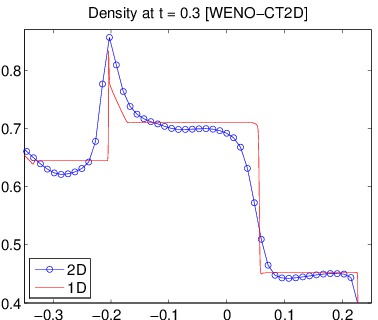} \\
(c)\includegraphics[width=0.42\textwidth]{den1d} &
(d)\includegraphics[width=0.42\textwidth]{denzm}
\end{tabular}
      \caption{Cut along $y=0$ of the rotated shock tube problem. Shown in these panels are (a) a 1D cut along $y=0$
      of the density as computed with the base WENO scheme
      (i.e., without constrained transport), (b) a zoomed in view of the same plot, (c) a 1D cut along $y=0$
      of the density as computed with the WENO-CT2D scheme, and (d) a zoomed in view of the same plot. The solid line in each plot is a highly resolved solution of the 1D shock tube problem by the 1D WENO5 scheme on a uniform mesh with $N=5000$.  \label{1drs}}
\end{center}
\end{figure}

\begin{figure}
\begin{center}
\begin{tabular}{cc}
(a)\includegraphics[width=0.42\textwidth]{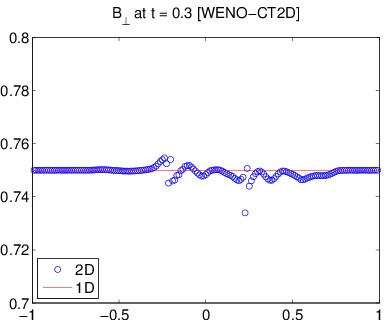} &
(b)\includegraphics[width=0.42\textwidth]{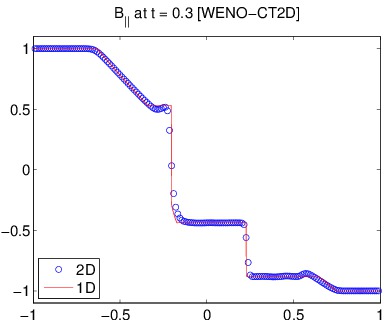} \\
(c)\includegraphics[width=0.42\textwidth]{bper} &
(d)\includegraphics[width=0.42\textwidth]{bprl}
\end{tabular}
      \caption{Cut along $y=0$ of the rotated shock tube problem. Shown in these panels are  1D cuts along $y=0$
      of the magnetic field (a) perpendicular ($B_\perp$) and (b) parallel ($B_\parallel$) to the shock interface as
      computed with the base WENO scheme  (i.e., without constrained transport) and
      1D cuts along $y=0$
      of the magnetic field (c) perpendicular ($B_\perp$) and (d) parallel ($B_\parallel$) to the shock interface as
      computed with the WENO-CT2D dscheme.
             The solid line in each plot is a highly resolved solution of the 1D shock tube problem by the 1D WENO5 scheme on a uniform mesh with $N=5000$. \label{1drsb}}
\end{center}
\end{figure}

\begin{figure}
\begin{center}
\begin{tabular}{cc}
(a)\includegraphics[width=0.42\textwidth]{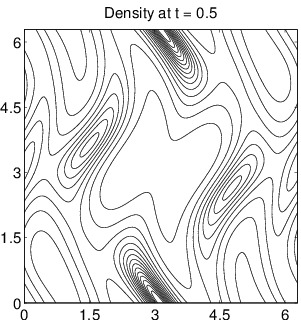} &
(b)\includegraphics[width=0.42\textwidth]{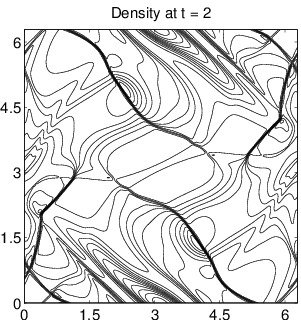} \\
(c)\includegraphics[width=0.42\textwidth]{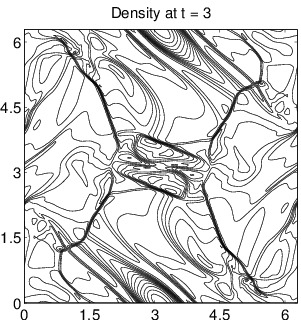} &
(d)\includegraphics[width=0.42\textwidth]{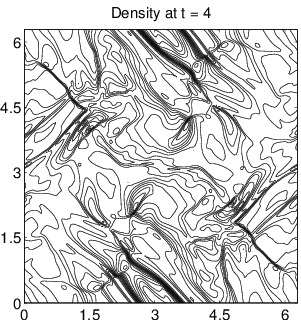}
\end{tabular}
      \caption{The Orszag-Tang vortex problem. Shown in these panels are the density at times (a) $t=0.5$, (b) $t=2$, (c) $t=3$, and (d)
      $t=4$ as computed with the WENO-CT2D scheme on a $192 \times 192$ mesh. 
      15 equally spaced contours were used for each plot. \label{OT4}}
\end{center}
\end{figure}

\begin{figure}
\begin{center}
\begin{tabular}{cc}
(a)\includegraphics[width=0.42\textwidth]{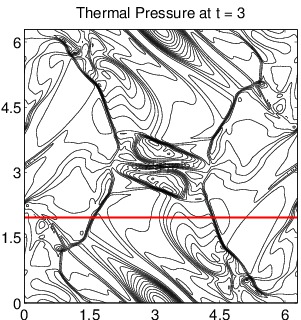} &
(b)\includegraphics[width=0.42\textwidth]{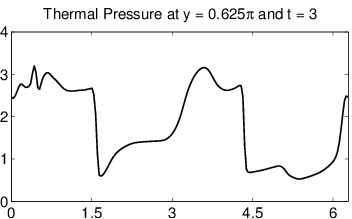}
\end{tabular}
      \caption{The Orszag-Tang vortex problem. Shown in these panels are (a) the thermal pressure
       as computed with the WENO-CT2D scheme at $t = 3$ on a $192 \times 192$ mesh and (b)
      a slice of the thermal pressure at $y = 0.625 \pi$. \label{pre2d}}
\end{center}
\end{figure}

\begin{figure}
\begin{center}
\begin{tabular}{cc}
(a)\includegraphics[width=0.42\textwidth]{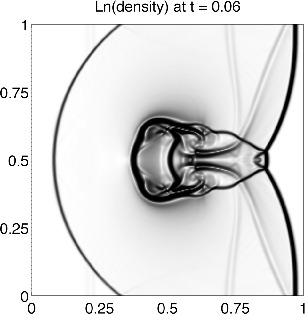} &
(b)\includegraphics[width=0.42\textwidth]{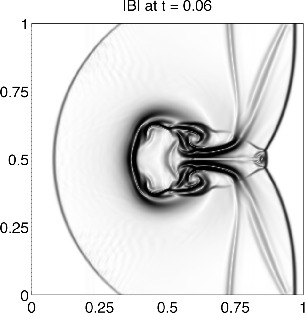}
\end{tabular}
      \caption{The 2D cloud-shock interaction problem. Shown in these panels are schlieren plots at time $t=0.06$ of (a)
      the natural log of the density  and (b) the norm of magnetic field. The solution was obtained using the
       WENO-CT2D scheme on a $256 \times 256$ mesh.  \label{2dcs}}
\end{center}
\end{figure}

\begin{figure}
\begin{center}
\begin{tabular}{cc}
(a)\includegraphics[width=0.42\textwidth]{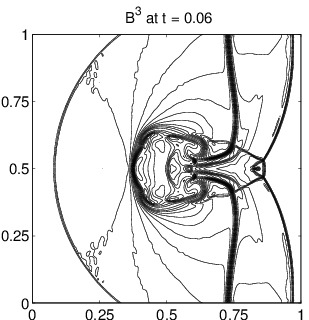} &
(b)\includegraphics[width=0.42\textwidth]{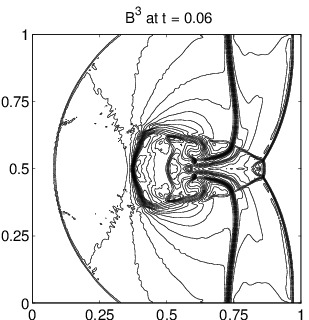}
\end{tabular}
      \caption{The 2D cloud-shock interaction problem. Shown in these panels are the $B^3$ component of the magnetic field at time $t = 0.06$ on a $256 \times 256$ mesh as solved with (a) the WENO-CT2D scheme and (b) the 2.5D version of the WENO-CT3D scheme.  Here the 2.5D problem was implemented in such a way that all three components of the magnetic field were updated in the constrained transport step, while in the 2D problem only first two components of the magnetic field were updated. 25 equally spaced contours were used in each of these panels. \label{2dvs25d}}
\end{center}
\end{figure}

\begin{figure}
  \begin{center}
    \includegraphics[width=1\textwidth]{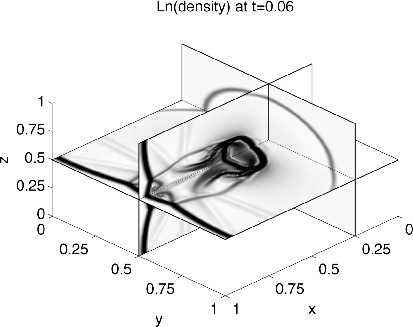}
      \caption{The 3D cloud-shock interaction problem. Shown in this plot is the natural log of the density as
      computed with the WENO-CT3D scheme on a $128 \times 128 \times 128$ mesh. \label{den3d}}
    \end{center}
\end{figure}

\appendix

\section{WENO reconstruction}
\label{sec:appendix}

The main idea of WENO reconstruction is to compute a finite difference stencil using a weighted average of several
smaller stencils. The weights are chosen based on the smoothness of the solution on each of the smaller stencils.
The full procedure can be found in  \cite{article:JiShu96, article:SeYaCh13, book:Sh98, article:Sh09}.
For completeness, we also include a brief description of the $5^{\text{th}}$-order WENO reconstruction as used
in this work and define the operator $\Phi_{\text{WENO}5}$ that was used in Section \ref{sdmhd}.

We consider the problem on a uniform grid with $N+1$ grid points,
\begin{align}
a = x_{0} < x_{1} < \dots < x_{N} = b,
\end{align}
and let the cell averages of some function $h(x)$ on the interval $I_i = \left(x_{i-\half},x_{i+\half} \right)$
be denoted by
\begin{align}
\bar{h}_i = \frac{1}{\Delta x} \int_{x_{i-\half}}^{x_{i+\half}} h(x) \, dx.
\end{align}
We would like to approximate the value of $h(x)$ at the half node $x_{i+\half}$ by WENO reconstruction on the stencil:
 $S = \{I_{i-2}, I_{i-1}, \ldots, I_{i+2}\}$. 
There are three sub-stencils for node $x_{i+\half}$:  $S_0 = \{I_{i-2}, I_{i-1}, I_{i} \}$, $S_1 = \{I_{i-1}, I_{i}, I_{i+1} \}$ and $S_2 = \{I_{i}, I_{i+1}, I_{i+2} \}$. 
Through a simple Taylor expansions of
$h(x)$, we can obtain $3^{\text{rd}}$-order accurate approximations of $h^{(i)}_{i+\half}$ on each sub-stencil $S_i$ as follows:
\begin{align}
h^{(0)}_{i+\half} & = \frac{1}{3} \bar{h}_{i-2} - \frac{7}{6}\bar{h}_{i-1} + \frac{11}{6} \bar{h}_{i}, \\
h^{(1)}_{i+\half} & = -\frac{1}{6} \bar{h}_{i-1} + \frac{5}{6}\bar{h}_{i} + \frac{1}{3} \bar{h}_{i+1}, \\
h^{(2)}_{i+\half} & = \frac{1}{3} \bar{h}_{i} + \frac{5}{6}\bar{h}_{i+1} - \frac{1}{6} \bar{h}_{i+2}.
\end{align}
The approximation $h_{i+\half}$ is then defined as a linear convex combination of the above three approximations:
\begin{align}
h_{i+\half} = w_0 \, h^{(0)}_{i+\half} + w_1 \, h^{(1)}_{i+\half} + w_2 \, h^{(2)}_{i+\half},
\end{align}
where the nonlinear weights are defined as 
\begin{gather}
w_j = \frac{{\tilde{w}_j}}{{\tilde{w}_0}+{\tilde{w}_1}+{\tilde{w}_2}}, \\
{\tilde{w}_0} = \frac{1}{(\epsilon+\beta_0)^2}, \quad
{\tilde{w}_1} = \frac{6}{(\epsilon+\beta_1)^2}, \quad
{\tilde{w}_2} = \frac{3}{(\epsilon+\beta_2)^2}.
\end{gather}
In our computations we take $\epsilon = 10^{-6}$ and the smoothness indicator parameters, $\beta_i$, are chosen as in \cite{article:JiShu96}:
\begin{align}
\beta_0 & = \frac{13}{12}(\bar{h}_{i-2}-2\bar{h}_{i-1}+\bar{h}_i)^2+\frac{1}{4}(\bar{h}_{i-2}-4\bar{h}_{i-1}+3\bar{h}_i)^2, \\
\beta_1 & = \frac{13}{12}(\bar{h}_{i-1}-2\bar{h}_{i}+\bar{h}_{i+1})^2+\frac{1}{4}(\bar{h}_{i-1}-\bar{h}_{i+1})^2, \\
\beta_2 & = \frac{13}{12}(\bar{h}_{i}-2\bar{h}_{i+1}+\bar{h}_{i+2})^2+\frac{1}{4}(3\bar{h}_{i}-4\bar{h}_{i+1}+\bar{h}_{i+2})^2.
\end{align}
From these  we define $\Phi_{\text{WENO}5}$ in Section \ref{sdmhd} as follows:
\begin{align}
\Phi_{\text{WENO}5} (\bar{h}_{i-2},\bar{h}_{i-1},\bar{h}_{i},\bar{h}_{i+1},\bar{h}_{i+2}) : = w_0 \, h^{(0)}_{i+\half} + w_1 \, h^{(1)}_{i+\half} + w_2 \, h^{(2)}_{i+\half}.
\end{align}
The approximation value $h_{i+\half}$ has the following properties: 
\begin{enumerate}
\item If $h(x)$ is smooth in the full stencil $S$, $h_{i+\half}$ is a $5^{\text{th}}$-order accurate approximation to the value $h\left(x_{i+\half} \right)$. \\
\item If $h(x)$ is not smooth or has discontinuity in the full stencil $S$, the nonlinear weights are computed in such a way that $h_{i+\half}$ is mainly reconstructed from the locally smoothest sub-stencil. Consequently, the spurious oscillations can be effectively controlled.
\end{enumerate}

\bibliographystyle{plain}

\end{document}